\newtheorem{thm}{Theorem}
\newtheorem{lemma}[thm]{Lemma}
\newtheorem{proposition}[thm]{Proposition}
\newcommand*{\Scale}[2][4]{\scalebox{#1}{$#2$}}
\renewcommand{\P}{\mathbb{P}_{\pi, P}}
\newcommand{\Po}{\mathbb{P}_{\pi^0,P^0}}
\newcommand{\E}{\mathbb{E}}
\newcommand{\K}[1]{\textup{KT}_k(#1)}
\newcommand{\KT}[2]{\mbox{KT}_{#1}(#2)}
\newcommand{\KThat}{\hat{k}_{\mbox{\tiny{KT}}}}
\renewcommand{\a}{\bold{a}_{n\times n}}
\newcommand{\z}{\bold{z}_n}
\newcommand{\zs}{\bold{z}_n^{\star}}
\newcommand{\pen}[1]{\mbox{pen}(#1)}
\newcommand{\pena}[2]{\mbox{pen}(#1,#2)}
\newcommand{\Z}{\bold{Z}_n}
\newcommand{\A}{\bold{A}_{n\times n}}
\DeclareMathOperator*{\argmax}{arg\,max}  
\begin{document}
\title{Estimation of the Number of Communities\\in the Stochastic Block Model }

\author{Andressa~Cerqueira and Florencia~Leonardi
\thanks{A. Cerqueira is with the Department
of Statistics, Universidade Federal de S\~ao Carlos, S\~ao Carlos,
SP, Brazil.}
\thanks{F. Leonardi is with Instituto de Matem\'atica e Estat\'\i stica, Universidade de S\~ao Paulo, S\~ao Paulo,
SP, Brazil.}
\thanks{Copyright (c) 2017 IEEE. Personal use of this material is permitted.  However, permission to use this material for any other purposes must be obtained from the IEEE by sending a request to 
 pubs-permissions@ieee.org}}


\maketitle

\begin{abstract}
In this paper we introduce an estimator for the number of communities in the Stochastic Block Model (SBM), 
based on the maximization of a penalized version of the so-called Krichevsky-Trofimov mixture distribution. 
We prove its eventual almost sure convergence to the underlying number of communities, 
without assuming a known upper bound on that quantity. Our results apply to both the dense and the sparse
regimes. To our knowledge this is the first consistency result for the estimation of the number of 
communities in the SBM in the unbounded case, that is when the number of communities is allowed to grow with the same size. 
\end{abstract}

\begin{IEEEkeywords}
Model selection, SBM, Krichevsky-Trofimov distribution, Minimum Description Length, Bayesian Information Criterion
\end{IEEEkeywords}

%
\IEEEpeerreviewmaketitle

\section{Introduction}

In this paper we address  the model selection problem for the  Stochastic Block Model (SBM); that is, the estimation of the number of communities given a sample of the adjacency matrix.  The SBM was introduced by \cite{holland1983stochastic} and has rapidly popularized in the literature as a model for random networks
exhibiting blocks or communities between their nodes. In this model, each node in the network has associated a latent discrete random variable describing its community label, and given two nodes, the possibility of a  connection between them depends only on the values of the nodes' latent variables.  

From a statistical point of view, some methods have been proposed to  address the problem of parameter estimation or label recovering for the SBM. Some examples include maximum likelihood estimation \cite{bickel2009nonparametric, amini2013pseudo}, variational methods \cite{daudin2008mixture,latouche2012variational}, spectral clustering \cite{rohe2011spectral} and Bayesian inference \cite{van2017bayesian}.
The asymptotic properties of these estimators have also been considered in subsequent works such as \cite{bickel2013asymptotic} or \cite{su2017strong}. In \cite{abbe2017community} the reader can find an overview of recent approaches and theoretical results concerning the problem of community detection in SBMs. All these approaches assume the number of communities is known \emph{a priori}. 

The model selection problem, that is the 
estimation of the number of communities,  has been also addressed before using different approaches. 
Some examples include methods based on the spectrum of the graph \cite{le2015estimating,bickel2016hypothesis,lei2016goodness} or cross validation \cite{chen2018network,li2016network}. 
 From a Bayesian perspective, in \cite{daudin2008mixture} the authors propose a  criterion
 known as Integrated Completed Likelihood (ICL) based on the previous work \cite{biernacki2000} 
 for clustering, where a penalized profile likelihood function is used as approximation of the ICL. 
 To our knowledge it was not until \cite{wang2017likelihood} that a consistency result was obtained for a 
 model selection criterion. In the cited work the authors propose the maximization of the penalized 
 log-likelihood function  and show its convergence in probability  to the true number of communities.  
 Their proof only applies to the case where the number of candidate values for the estimator is finite
  (it is upper bounded by a known constant) and the network average degree grows at least as a 
  polylog function on the number of nodes. Moreover, the penalizing term is of order $n\log n$, with $n$
  the number of nodes in the network, a rate considerably bigger than the usual penalizing term  arising in the classical Bayesian Information Criterion \cite{schwarz1978}.
From a practical point of view, the computation of the log-likelihood function
 and its supremum is  not a simple task due to the hidden nature of the nodes' labels. However, 
 some approximate  versions of the estimator can be obtained  by variational methods
  using the EM algorithm  \cite{bickel2013asymptotic,daudin2008mixture},
a   profile maximum likelihood criterion as in \cite{bickel2009nonparametric} 
or the pseudo-likelihood algorithm in \cite{amini2013pseudo}.
 In a recent paper, the authors of  \cite{hu2019corrected} study a related method to the likelihood approach in  \cite{wang2017likelihood}, using the profiled conditional likelihood that they call corrected Bayesian Information Criterion. The hypothesis they  assume are the same as in \cite{wang2017likelihood}  and the penalty term is of order $n$.

In this paper we take an information-theoretic perspective and  introduce 
the Kri\-chevs\-ky-Tro\-fi\-mov (KT)  estimator  in order to determine the number of communities of a SBM  based on a sample of the adjacency matrix of the network. 
The KT estimator can be seen as a particular version of  the  Model Description Length (MDL) principle  \cite{barron1998} with KT code lengths \cite{kt1981} and 
has been previously proposed as a model selection criteria for  the memory of a Markov
 chain \cite{finesso1992estimation,csiszar2000consistency}, the context tree of a variable length Markov
  chain \cite{csiszar-talata-2006} or the number of hidden states in a Hidden Markov Model  \cite{liu1994order, gassiat2003optimal}. The proposed method is a penalized estimator based on 
  a mixture distribution of the model, known as Krichevsky-Trofimov mixture distribution. It can be seen as a Bayesian estimator with a particular choice for the 
  prior distributions, and it is somehow related to the approach proposed  in \cite{latouche2012variational}.

The main contribution of this work is the proof of the strong  consistency  
of the proposed estimator to select the number of communities in the SBM. 
 By  strong consistency  we mean that   eventually, the estimator equals the true number of communities with probability one, and the term should not be confused with the strong recovery notion in community detection problems \cite{zhao2012consistency}. 
 We prove the strong consistency  of the estimator in the \textit{dense} regime, where the probability of having an edge is considered to be constant, and in the \textit{sparse} regime where this probability goes to zero with $n$ having order $\rho_n$. The study of the second regime is more interesting in the sense that it is necessary to control how much information is required 
 to estimate the  parameters of the model. We prove the strong consistency in the sparse case  provided
  the expected degree of a given node grows to infinity, that is $n\rho_n \rightarrow \infty$, weakening the assumption in  \cite{wang2017likelihood}  that proves consistency in the regime $\frac{n\rho_n}{\log n} \to \infty$. We also consider a penalty function of smaller order compared to $n\log n$ used in  \cite{wang2017likelihood}   and we do not assume a known upper bound on the true number of communities.
To our knowledge, this is the first strong consistency result for an estimator of the number of communities, even in the bounded case, and the first one to prove consistency when the number of communities is allowed to grow with the sample size. 
We also investigate the performance of the variational approximation  introduced in \cite{latouche2012variational} 
and compare the performance of this algorithm with other methods on simulated networks. The simulation results show that the performance of the approximation to the KT estimator is comparable with other methods for balanced networks. However, this estimator performs better for unbalanced networks.  
 
The paper is organized as follows. In Section~\ref{defs} we define the model and the notation used in the paper, in Section~\ref{kt}  we introduce the KT estimator for the number of communities and state the main result. The proof of the consistency of the estimator is presented in Section~\ref{proof}.  In section \ref{simulations} we investigate the performance of the variation approximation of the estimator on simulated data. The final discussions are provided in Section \ref{discussion}.

\section{The Stochastic Block Model}\label{defs}

Consider a non-oriented random network with nodes $\{1,2,\dotsc, n\}$,  
specified by its adjacency  matrix $\A \in \{0,1\}^{n\times n}$ 
that is  symmetric and has  diagonal entries equal to zero. Each node 
$i$ has associated a latent (non-observed) variable $Z_i$ 
on $[k]:=\{1,2,\dotsc, k\}$, the \emph{community} label of node $i$.  

The SBM with $k$ communities is a probability model for a random 
network as above, where the latent variables $\Z=(Z_1,Z_2,\cdots,Z_n)$ 
are independent and identically distributed random variables over $[k]$ 
and the law of the adjacency matrix $\A$, conditioned on the value of 
the latent variables 
$\Z=\z$, is a product measure of Bernoulli random variables whose 
parameters depend only on the nodes' labels. More formally, there exists 
a probability distribution over $[k]$, denoted by $\pi=(\pi_1,\cdots,\pi_{k})
$, and a symmetric probability matrix  $P \in [0,1]^{k\times k}$ such that 
the distribution of the pair $(\Z,\A)$ is given by 

\begin{equation}\label{def-prob}
\P(\z,\a) = \prod_{1\leq a\leq k} \!\pi_{a}^{n_a}\!\! \prod_{1\leq a\leq b\leq k} \!\!P_{a,b}^{o_{a,b}} (1-P_{a,b})^{n_{a,b}-o_{a,b}}\,,
\end{equation}
where the counters $n_a=n_a(\z)$, $n_{a,b}=n_{a,b}(\z)$ and $o_{a,b}=o_{a,b}(\z,\a)$ are given by
\begin{align*}
n_a(\z) &= \sum\limits_{i=1}^n \mathds{1}\{z_i=a\}\, , \qquad\quad\;\, 1 \leq a \leq k\,,\\
n_{a,b}(\z) &=\begin{cases}
n_a(\z)n_b(\z)\, ,& 1 \leq a < b \leq k,\\
\frac12n_a(\z)(n_a(\z)-1)\, & 1 \leq a=b \leq k,\,
\end{cases}
\end{align*}
and  
\[
o_{a,b}(\z,\a) =  
\begin{cases} \sum\limits_{1\leq i, j\leq n} \mathds{1}\{z_i=a,z_j=b\}a_{ij} ,&a <  b,\\
\sum\limits_{1\leq i < j\leq n} \mathds{1}\{z_i=a,z_j=b\}a_{ij}  ,&a =b\,.
\end{cases}
\]
As it is usual in the definition of likelihood functions, by convention we define $0^0=1$ in \eqref{def-prob} when some of the parameters are 0. 

We denote by  
 $\Theta^k$ the  parametric space for a model with $k$ communities, given by
\begin{equation*}
\begin{split}
\Theta^k= \bigg\lbrace  (\pi,P)\colon  \pi \in (0,1]^k, \,\sum_{a=1}^k\pi_a=1,&\,P \in [0,1]^{k\times k},\\
&  P\,\text{ is symmetric}\bigg\rbrace\,.
\end{split}
\end{equation*}

The  \emph{order} of the SBM is defined as the smallest $k$ for which
the equality \eqref{def-prob} holds for a pair of parameters 
$(\pi^0, P^0)\in \Theta^k$ and will be denoted by $k_0$.  
If a SBM has order $k_0$ then  it cannot be reduced to a model with less communities than $k_0$; this specifically means that $P^0$ does not have two identical columns.

When $P^0$ is fixed and does not depend on $n$, the mean degree of a given node grows
 linearly in $n$ and this regime produces  very connected, dense graphs. 
In this paper we also consider the regime  producing sparse graphs 
 (with less edges), that occurs when $P^0$  decreases to the zero matrix   with $n$.  
 In this sparse regime  we write $P^0=\rho_nS^0$, where $S^0 \in [0,1]^{k\times k}$ does not 
 depend on $n$ and $\rho_n$ is a function decreasing to 0 at a sufficiently slow rate such that $n\rho_n \rightarrow \infty$.

\section{The KT order estimator}\label{kt}

Given a sample $(\z,\a)$ from the  distribution \eqref{def-prob} with parameters $(\pi^0,P^0)$,  where we assume we only observed the network $\a$,  the estimator of the number of communities  is  defined by
\begin{equation}\label{est_kt}
\hat{k}_{\mbox{\tiny{KT}}}(\a)=\argmax\limits_{1\leq k\leq n} \{ \,  \log \KT{k}{\a} - \pen{k,n}\,\}\,,
\end{equation}
where $\K{\a}$ is the integrated likelihood for a SBM with $k$ communities and $\pena{k}{n}$ is a penalizing function that will be specified later. The integrated likelihood $\K{\a}$ is obtained by integrating the likelihood of the model using a specific choice of prior distribution for the parameters $(\pi,P)$. In this specific setting, we choose as a prior distribution a product of a Dirichlet($1/2,\cdots, 1/2$), the prior distribution for $\pi$,  and a product of $(k^2+k)/2$ Beta($1/2,1/2$) distributions, the prior for the symmetric matrix $P$. Formally, we define the distribution $\nu_k(\pi, P)$ on  $\Theta^k$ as
\begin{equation}\label{mixture_dist}
\begin{split}
\nu_k(\pi, P)&= {\textstyle\frac{\Gamma\left(\frac{k}{2}\right)}{\Gamma\left(\frac{1}{2}\right)^k}}\prod_{1\leq a\leq k}\pi_{a}^{-\frac12} \\
& \qquad \times
 \;\prod_{1\leq a\leq b \leq k} {\textstyle\frac{1}{\Gamma\left(\frac{1}{2}\right)^2}}\;P_{a,b}^{-\frac12}(1-P_{a,b})^{-\frac12}
\end{split}
\end{equation}
and the integrated likelihood based on $\nu_k(\pi, P)$ is given by
\begin{equation}\label{kt-mix}
\begin{split}
\K{\a}& =\E_{\nu_k}[\,  \P(\a)  \,] \\
& = \int_{\Theta^k}\P(\a)\nu_k(\pi, P) d\pi dP \,,
\end{split}
\end{equation}
where $\P(\a)$ stands for the marginal distribution obtained from \eqref{def-prob}, that is 
\begin{equation}\label{eq:likelihood_function}
 \P(\a)=\sum\limits_{\z \in [k]^n} \P(\z,\a)\,.
\end{equation}

The distribution given in \eqref{kt-mix} is the integrated marginal  likelihood of the model, also known as model evidence under a Bayesian perspective, see  for example the related work \cite{latouche2012variational}. Because of the specifc choice of $\nu_k(\pi, P)$, in this paper we will follow the information-theoretical tradition and call the integrated likelihood given in \eqref{kt-mix} the Krichevsky-Trofimov mixture and the derived estimator for the number of communities \eqref{est_kt} the KT estimator. 

As in other model selection problems  where the KT approach has proved to be very useful, see for example  \cite{liu1994order, gassiat2003optimal,csiszar-talata-2006}, 
 in the case of the SBM there is a closed relationship between the KT mixture distribution and the maximum likelihood function. 
The following proposition shows non asymptotic uniform bounds for the log-likelihood function in terms of the logarithm of the KT distribution. Its proof
is postponed to the Appendix.

\begin{proposition}\label{prop:razao_L_KT}
For all $k$, all $n\geq \max(4,k)$ and all $\a$ we have that 
\begin{align}
\log\K{\a} \;& \leq \; \log\sup_{(\pi,P) \in \Theta^k} \P(\a) \\
& \;\leq\; \log\K{\a} + \textstyle\frac{k(k+2)-1}{2} \log n + c_k\notag
\end{align}
where 
\begin{equation}\label{def:ck}
c_{k} = k(k+1) + 1\,. 
\end{equation}
\end{proposition}

Proposition~\ref{prop:razao_L_KT}  is at the core of the proof of the 
consistency  of $\hat{k}_{\mbox{\tiny{KT}}}$ defined by \eqref{est_kt}. 
In order to derive the strong consistency result for the  KT order estimator, 
we need a penalty 
function in  \eqref{est_kt}  with a given rate of convergence when $n$ grows to infinity. 
Although there is a range of possibilities for this penalty function, 
the specific form we use in this paper    is 
\begin{equation}\label{eq:penalty}
\begin{split}
\pena{k}{n} 
& =\Bigl[ \textstyle\frac{k(k-1)(2k-1)}{12} + \textstyle\frac{k(k-1)}{2} + \textstyle\frac{(1+\epsilon)(k-1)}{2}  \Bigr] \log n
\end{split}
\end{equation}
for any $\epsilon>0$.
The convenience of the expression above will be make clear in the proof of the consistency result.  
 Observe that the penalty function defined by \eqref{eq:penalty} is dominated by a term of order  $k^3\log n$ and then  it is of smaller order than the function $\frac{k(k+1)}{2} n\log n$ used in \cite{wang2017likelihood}. For a model selection criterion, a too strong penalty term can lead to a bigger
 probability of  underestimating the true number of communities, then a small penalty term is in general 
 desirable. 

We finish this section by stating  the main theoretical result in this paper. \\
\begin{thm}\label{the:estimators_convergence}
Suppose the SBM has order $k_0$ with parameters $(\pi^0,P^0)$, and suppose 
$\pen{k,n}$ is given by  \eqref{eq:penalty}. 
Then we have that
\[
\hat{k}_{\mbox{\tiny{KT}}}(\a) = k_0 
\]
eventually almost surely as $n\to\infty$. 
\end{thm}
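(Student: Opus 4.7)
The plan is to show, for each $k\neq k_0$ separately, that $\log\KT{k_0}{\a}-\pena{k_0}{n} > \log\KT{k}{\a}-\pena{k}{n}$ eventually almost surely, and then combine via Borel-Cantelli. I would split into the overestimation case ($k>k_0$) and the underestimation case ($k<k_0$), because they require fundamentally different tools: overestimation is handled by a clean information-theoretic Barron-type argument exploiting both Proposition~\ref{prop:razao_L_KT} and the fact that $\KT{k}{\cdot}$ is a probability measure on the space of adjacency matrices, while underestimation requires a concentration argument showing the likelihood under a too-small model is smaller than the true likelihood by an amount of order $n^2\rho_n$, which dwarfs the logarithmic penalty.

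For overestimation, fix $k>k_0$ and define the bad event $B_n^{(k)}=\{\a:\log\KT{k}{\a}-\log\KT{k_0}{\a}>\pena{k}{n}-\pena{k_0}{n}\}$. By Proposition~\ref{prop:razao_L_KT} applied to $k_0$,
\[
\Po(\a)\;\leq\;\sup_{(\pi,P)\in\Theta^{k_0}}\Ps_{\pi,P}(\a)\;\leq\;\KT{k_0}{\a}\,n^{(k_0(k_0+2)-1)/2}\exp(c_{k_0,n}),
\]
and on $B_n^{(k)}$ we can replace $\KT{k_0}{\a}$ by $\KT{k}{\a}\exp\{-(\pena{k}{n}-\pena{k_0}{n})\}$. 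Summing over $\a$ and using $\sum_{\a}\KT{k}{\a}=1$ yields
\[
\Po(B_n^{(k)})\;\leq\;n^{(k_0(k_0+2)-1)/2}\exp(c_{k_0,n})\exp\{-(\pena{k}{n}-\pena{k_0}{n})\}.
\]
The penalty \eqref{eq:penalty} is engineered so that, for $k=k_0+1$, this is $O(n^{-2-\epsilon/2})$, and for larger $k$ the exponent in $n$ grows superlinearly in $k-k_0$ (ultimately cubically). Consequently $\sum_n\sum_{k>k_0}\Po(B_n^{(k)})$ converges, and Borel-Cantelli closes this case without any a priori upper bound on $k$.

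For underestimation, fix $k<k_0$. Proposition~\ref{prop:razao_L_KT} for $k_0$ gives $\log\KT{k_0}{\a}\geq\log\Po(\a)-C(k_0)\log n$, while the trivial mixture bound gives $\log\KT{k}{\a}\leq\log\sup_{(\pi,P)\in\Theta^k}\Ps_{\pi,P}(\a)$. Since the penalty difference $\pena{k_0}{n}-\pena{k}{n}$ is $O(\log n)$, it suffices to show that almost surely
\[
\log\Po(\a)\,-\,\log\sup_{(\pi,P)\in\Theta^k}\Ps_{\pi,P}(\a)\;\gg\;\log n.
\]
For the lower term I would use $\log\Po(\a)\geq\log\Po(\zo,\a)$ with $\zo$ the true latent labels, and apply Bernstein's inequality to the explicit product form \eqref{def-prob} to concentrate it around its mean (an entropy-type quantity times $\tfrac{n(n-1)}{2}$ in the dense case, or $\tfrac{n(n-1)}{2}\rho_n$ in the sparse one). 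For the upper term I would bound the marginal by the complete-data likelihood, $\sup_{\Theta^k}\Ps_{\pi,P}(\a)\leq k^n\max_{\z\in[k]^n,(\pi,P)\in\Theta^k}\Ps_{\pi,P}(\z,\a)$, whose inner maximum is the multinomial-Bernoulli MLE with explicit empirical-frequency form. A union bound over the at most $k^n$ label assignments, together with strict positivity of the KL divergence between the true SBM and its best $k$-community approximation (guaranteed by the order-$k_0$ assumption, i.e., no two columns of $P^0$ coincide), produces a separation of order $n^2\rho_n$, dominating $\log n$ once $n\rho_n$ is large. Only finitely many $k\in\{1,\ldots,k_0-1\}$ must be treated, so the union is harmless.

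The main obstacle I anticipate is the uniform concentration step in the sparse regime. Since edge variances scale like $\rho_n$, Hoeffding is too crude; Bernstein instead yields fluctuations of order $\sqrt{n^2\rho_n\log n}$ around the mean log-likelihood, which is $o(n^2\rho_n)$ precisely under the hypothesis $n\rho_n\to\infty$ used in the statement. Combined with strict positivity of the minimum KL gap to any too-small model, this delivers the required separation and closes the underestimation side. The overestimation side, once Proposition~\ref{prop:razao_L_KT} is in hand, is essentially mechanical, with the sole care being summability over unbounded $k$ — immediate from the cubic-in-$k$ growth of $\pena{k}{n}$.
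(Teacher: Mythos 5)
Your overestimation half is essentially the paper's own argument: the bad-event/change-of-measure step you describe (bound $\Po(\a)$ by $\KT{k_0}{\a}$ via Proposition~\ref{prop:razao_L_KT}, swap $\KT{k_0}{\a}$ for $\KT{k}{\a}e^{\pena{k_0}{n}-\pena{k}{n}}$ on the bad event, sum over $\a$ using that $\K{\cdot}$ is a probability measure) is exactly the paper's Lemma~\ref{prop:ineq_prob_k}, and your single sum over all $k>k_0$ is a legitimate reorganization of the paper's three-range split ($k\in(k_0,\log n]$, $(\log n,n]$, $(n,\infty)$); the geometric-in-$k$ decay coming from the cubic penalty makes the double sum converge, so no upper bound on $k_0$ is needed, just as you say.

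The underestimation half has a genuine gap. Your entire argument rests on ``strict positivity of the KL divergence between the true SBM and its best $k$-community approximation (guaranteed by the order-$k_0$ assumption),'' but this is precisely the statement that must be proved, and it is the mathematical core of Lemmas~\ref{lemma:ratio_underfitting_rho} and~\ref{lemma:ratio_underfitting2}: one has to show that the supremum, over \emph{all} label assignments $\z\in[k_0-1]^n$ and all $(\pi,P)\in\Theta^{k_0-1}$, of the normalized expected complete log-likelihood is attained at a merging $M_{r,s}(\pi^0,P^0)$ of two blocks, and that every such merging is strictly suboptimal. The paper does this by writing the limiting under-fitted parameters through a coupling matrix $Q$ on $[k_0]\times[k_0-1]$ with marginals $\pi^0$ and $\tilde\pi$, invoking the Wang--Bickel result that the optimal coupling is a block merging, and then applying Jensen's inequality to the convex function $\gamma$ (resp.\ $\tau$ in the sparse case), where strictness follows from $P^0$ having no two identical columns. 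Without this identification the separation constant in your Bernstein/union-bound scheme could a priori be zero. A second, smaller issue: in the sparse regime ``Bernstein on $\log\Po(\zo,\a)$'' is not well posed as stated, because $\gamma'(x)=\log x-\log(1-x)$ blows up at $0$; one must first extract the divergent $E_n\log\rho_n$ term via the Bickel--Chen expansion \eqref{eq:bickel_2009} (it cancels between the $k_0$ and $k_0-1$ models) before any concentration of the remaining $\tau$-functional of order $n^2\rho_n$ can be carried out. With those two ingredients supplied, your finite-sample route (union bound over $k^n$ assignments, paying $n\log k=o(n^2\rho_n)$) would work and is a viable alternative to the paper's SLLN-based limit argument, but as written the decisive step is asserted rather than proved.
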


The proof of this and other auxiliary results are given in the next section and in the Appendix. 

\section{Proof of the Consistency Theorem}\label{proof}

The proof of Theorem~\ref{the:estimators_convergence} is divided in two main parts. The first one, presented in Subsection~\ref{non-over}, proves that  $\hat{k}_{\mbox{\tiny{KT}}}(\a)$  does not overestimate the true order $k_0$, eventually almost surely when $n\to\infty$, even without assuming a known upper bound on $k_0$. The second part of the proof, presented in Subsection~\ref{non-under}, shows that $\hat{k}_{\mbox{\tiny{KT}}}(\a)$  does not underestimate $k_0$, eventually almost surely when $n\to \infty$. By combining these two results we prove that $\hat{k}_{\mbox{\tiny{KT}}}(\a) = k_0$ eventually almost surely as $n\to\infty$. 

\subsection{Non-overestimation}\label{non-over}

The main result in this subsection is given by the following proposition. \\

\begin{proposition}\label{prop:no_overestimation} 
Let $\a$ be a sample of size $n$ from a SBM of order $k_0$, with parameters $\pi^0$ and $P^0$. 
Then, the $\hat{k}_{\mbox{\tiny{KT}}}(\a)$ order estimator defined in \eqref{est_kt}  does not 
overestimate $k_0$, eventually almost surely when $n\to\infty$. 
\end{proposition}

The proof of Proposition~\ref{prop:no_overestimation} follows straightforward 
from Lemmas~\ref{lemma:k0_log} and  \ref{lemma:log_n} presented below.
These lemmas are inspired in the work \cite{gassiat2003optimal} which proves consistency for an order estimator of a  Hidden Markov Model (HMM). 
 In any case, we would like to emphasise that even if the SBM can be seen as a  
``hidden variable model'', there are substantial differences with HMM, the most important one being that  in the case of a SBM,  when a new node is added there are $n$ possible new edges in the network, depending on the labels of all previous nodes. In contrast,  in a HMM the observable only depends on the state at time $n$.



\begin{lemma}\label{lemma:k0_log}
Under the hypotheses of Proposition~\ref{prop:no_overestimation}  we have that
$$\hat{k}_{\mbox{\tiny{KT}}}(\a) \not\in (k_0,\log n]$$
eventually almost surely when $n\to\infty$.
\end{lemma}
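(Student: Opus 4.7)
The plan is to estimate, for each $k$ in the range $(k_0,\log n]$, the probability that $\hat{k}_{\mbox{\tiny{KT}}}(\a)=k$ via a Markov-type bound, then sum over $k$ and invoke Borel--Cantelli. Fix such a $k$. By the definition \eqref{est_kt} of the estimator,
\[
\{\hat{k}_{\mbox{\tiny{KT}}}(\a)=k\}\;\subset\;\Bigl\{\KT{k}{\a}/\KT{k_0}{\a}\geq e^{\pena{k}{n}-\pena{k_0}{n}}\Bigr\},
\]
and Markov's inequality under $\Po$ gives
\[
\Po(\hat{k}_{\mbox{\tiny{KT}}}(\a)=k)\;\leq\; e^{-(\pena{k}{n}-\pena{k_0}{n})}\;\E_{\pi^0,P^0}\!\left[\frac{\KT{k}{\a}}{\KT{k_0}{\a}}\right].
\]

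To control this expectation I would combine two observations. First, $(\pi^0,P^0)\in\Theta^{k_0}$, so $\sup_{(\pi,P)\in\Theta^{k_0}}\P(\a)\geq\Po(\a)$, and Proposition~\ref{prop:razao_L_KT} applied at $k_0$ yields
\[
\KT{k_0}{\a}\;\geq\;\Po(\a)\,n^{-C_0}e^{-c_{k_0,n}},\qquad C_0=\tfrac{k_0(k_0+2)}{2}-\tfrac{1}{2}.
\]
Second, $\KT{k}{\cdot}$ is itself a probability measure on the set of adjacency matrices --- being a mixture of the distributions $\P(\cdot)$ against the prior $\nu_k$ --- so the ``Kraft-type'' identity
\[
\E_{\pi^0,P^0}\!\left[\frac{\KT{k}{\a}}{\Po(\a)}\right]\;=\;\sum_{\a}\KT{k}{\a}\;=\;1
\]
holds. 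Plugging both into the Markov bound,
\[
\Po(\hat{k}_{\mbox{\tiny{KT}}}(\a)=k)\;\leq\; e^{c_{k_0,n}}\,n^{C_0}\,e^{-(\pena{k}{n}-\pena{k_0}{n})}.
\]

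For $k>k_0$ the penalty difference is at least $\tfrac{k_0(k_0+2)+3+\epsilon}{2}\log n$ (the $i=k_0$ term of the telescoping sum in \eqref{eq:penalty}), which exceeds $C_0\log n$ by exactly $(2+\epsilon/2)\log n$. Since $c_{k_0,n}$ stays bounded as $n\to\infty$ and the interval $(k_0,\log n]$ contains at most $\log n$ integers, a union bound gives
\[
\Po\bigl(\hat{k}_{\mbox{\tiny{KT}}}(\a)\in(k_0,\log n]\bigr)\;\leq\; C(k_0)\,(\log n)\,n^{-(2+\epsilon/2)},
\]
which is summable in $n$, and Borel--Cantelli concludes.

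The only delicate point is the arithmetic matching of $\log n$-coefficients: the penalty in \eqref{eq:penalty} has been engineered so that already the first penalty increment beats the $C_0\log n$ overhead from Proposition~\ref{prop:razao_L_KT} by a strictly positive margin in the exponent of $n$, which is precisely what converts the Markov bound into a Borel--Cantelli-summable series. Everything else is an elementary union bound; the restriction to the window $(k_0,\log n]$ costs only a $\log n$ factor and will be removed in Lemmas~\ref{lemma:log_n} and \ref{lemma:n_inf}, which must handle the regime where $k$ is allowed to grow with $n$.
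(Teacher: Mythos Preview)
Your argument is correct and is essentially the paper's own proof: the paper packages your Markov/Kraft step into an auxiliary lemma (Lemma~\ref{prop:ineq_prob_k}) which derives exactly the bound $\Po(\hat{k}_{\mbox{\tiny{KT}}}(\a)=k)\leq e^{c_{k_0,n}}\,n^{C_0}\,e^{\pena{k_0}{n}-\pena{k}{n}}$ from Proposition~\ref{prop:razao_L_KT} and the fact that $\KT{k}{\cdot}$ is a probability measure, and then sums over $k\in(k_0,\log n]$, computes the same penalty arithmetic $-(2+\epsilon/2)\log n$, and applies Borel--Cantelli. The only difference is presentational.
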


\begin{proof}
First observe that 
\begin{equation}\label{firsteq}
\begin{split}
&\Po( \hat{k}_{\mbox{\tiny{KT}}}(\a) \in (k_0, \log n] ) \\
&\qquad\qquad\;=\;  \sum\limits_{k=k_0+1}^{\log n} \Po( \hat{k}_{\mbox{\tiny{KT}}}(\a) = k )\,.
\end{split}
\end{equation}
Using Lemma~\ref{prop:ineq_prob_k} we can bound the sum in the right-hand side by 
\begin{align}\label{bound1}
\sum\limits_{k=k_0+1}^{\log n} &\exp\left\lbrace   \textstyle\frac{k_0(k_0+2)-1}{2}\log n + c_{k_0} + d_{k_0,k,n}\right\rbrace\\
 &\leq  e^{ c_{k_0}}\,\log n \,\exp\left\lbrace   \textstyle\frac{k_0(k_0+2)-1}{2}\log n + d_{k_0,k_0+1,n} \right\rbrace,\notag
\end{align}
where $d_{k_0,k,n} = \pen{k_0,n}-\pen{k,n}$ and $c_{k_0}=k_0(k_0+1)+1$.  
A simple calculation gives that
\begin{equation*}
\begin{split}
\pena{k}{n} 
&= \sum\limits_{i=1}^{k-1}\bigl[\textstyle\frac{(i(i+2)+1+\epsilon}2\bigr]\log n  \\
\end{split}
\end{equation*}
and therefore 
\begin{equation*}
\begin{split}
\textstyle\frac{k_0(k_0+2)-1}{2}&\,\log n + d_{k_0,k_0+1,n}\\[1mm]
&\;=\textstyle\Bigl(\frac{k_0(k_0+2)-1}{2} 
- \textstyle{\frac{ k_0(k_0+2)+1+\epsilon}{2}}\Bigr) \log n\\
&\;= -(1+\epsilon/2)\log n\,.\\
\end{split}
\end{equation*}
By using this expression in the right-hand side of \eqref{bound1} to bound \eqref{firsteq}
we obtain 
that 
\begin{align*}
\sum_{n=1}^{\infty}\,\textstyle\Po( \hat{k}_{\mbox{\tiny{KT}}}(\a) \in (k_0, \log n] ) &\;\leq\; e^{c_{k_0}} \sum\limits_{n=1}^{\infty} \frac{\log n}{n^{1+\epsilon/2}} \\
& \;<\; \infty\;.
\end{align*}
Now the result follows by the first Borel Cantelli lemma. 
\end{proof}



\begin{lemma}\label{lemma:log_n}
Under the hypotheses of Proposition~\ref{prop:no_overestimation}  we have that
$$\hat{k}_{\mbox{\tiny{KT}}}(\a) \not\in (\log n, n]$$
eventually almost surely when $n\to\infty$.
\end{lemma}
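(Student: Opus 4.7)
The plan is to follow the template of the proof of Lemma~\ref{lemma:k0_log}, but to exploit the much faster (cubic-in-$k$) growth of $\pena{k}{n}$ in order to compensate for the fact that the candidate range $(\log n, n]$ now contains $O(n)$ values of $k$, rather than only $O(\log n)$. First I would apply a union bound
\[
\Po(\KThat(\a) \in (\log n, n]) \;\leq\; \sum_{k = \lceil \log n\rceil + 1}^{n} \Po(\KThat(\a) = k),
\]
and for each $k$ in this range (which eventually satisfies $k > k_0$) appeal to the per-$k$ inequality of Lemma~\ref{prop:ineq_prob_k} used already in the proof of Lemma~\ref{lemma:k0_log}, namely
\[
\Po(\KThat(\a) = k) \;\leq\; \exp\Bigl\{\tfrac{k_0(k_0+2)-1}{2}\log n + c_{k_0,n} + \pena{k_0}{n} - \pena{k}{n}\Bigr\}.
\]

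Since $\pena{k}{n}$ is strictly increasing in $k$, the right-hand side is maximized over the range at $k = \lceil\log n\rceil+1$, so bounding each summand by this worst case and multiplying by the number of summands (at most $n$) gives
\[
\Po(\KThat(\a) \in (\log n, n]) \;\leq\; n\, e^{c_{k_0,n}}\, n^{(k_0(k_0+2)-1)/2}\, \exp\bigl\{\pena{k_0}{n} - \pena{\lceil\log n\rceil+1}{n}\bigr\}.
\]
The next step is to estimate $\pena{\lceil\log n\rceil+1}{n} - \pena{k_0}{n}$ from below using the explicit form \eqref{eq:penalty}: the telescoping sum equals $\sum_{i=k_0}^{\lceil \log n\rceil} \frac{i(i+2)+3+\epsilon}{2}\log n$, whose leading contribution is of order $(\log n)^3 \cdot \log n = (\log n)^4$. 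Consequently the overall upper bound is bounded by $\mathrm{poly}(n)\cdot \exp\{-C(\log n)^4\}$ for some $C>0$ and all $n$ large enough, which is summable in $n$. First Borel--Cantelli then yields the conclusion.

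The only non-routine point I expect is the last estimate. Even though each individual summand already decays exponentially in $\pena{k}{n}$, there are $O(n)$ summands, so one really needs the quadratic-in-$i$ term inside the defining sum of $\pena{k}{n}$ in order to guarantee that the worst-case summand (the one with $k=\lceil\log n\rceil+1$) decays faster than any negative power of $n$. This is precisely the reason the penalty \eqref{eq:penalty} is built with the $i(i+2)$ factor, and it is also why the argument extends all the way up to $k=n$ without any a~priori upper bound on $\KThat(\a)$.
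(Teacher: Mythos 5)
Your proposal is correct and follows essentially the same route as the paper: a union bound over $k\in(\log n,n]$, the per-$k$ bound from Lemma~\ref{prop:ineq_prob_k}, monotonicity of $\pena{k}{n}$ in $k$ to reduce to the worst case $k\approx\log n$, and the cubic-in-$k$ growth of the penalty coefficient (giving a term of order $(\log n)^4$ in the exponent) to beat the factor $n$ coming from the number of summands, followed by Borel--Cantelli.
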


\begin{proof}
As in the proof of Lemma~\ref{lemma:k0_log} we write
\begin{equation}
\begin{split}
&\Po(\hat{k}_{\mbox{\tiny{KT}}}(\a) \in ( \log n , n]\, ) \\
&\qquad\qquad=  \sum\limits_{k=\log n}^{n} \Po( \hat{k}_{\mbox{\tiny{KT}}}(\a) = k )
\end{split}
\end{equation}
and we use again Lemma~\ref{prop:ineq_prob_k} to bound the sum in the right-hand side by 
\begin{align}\label{main-ineq}
&\sum\limits_{k=\log n}^{ n} \exp\left\lbrace   \textstyle\frac{k_0(k_0+2)-1}{2}\log n + c_{k_0} + d_{k_0,k,n} \right\rbrace\notag\\
&\;\; \leq\; e^{ c_{k_0}}\, n\, \exp \Bigl\{ \textstyle \log n \,\Bigl[ \frac{k_0(k_0+2)-1}{2} +
\frac{d_{k_0,\log n,n}}{\log n} \Bigr] \Bigr\} \,.
\end{align}
Since $\pen{k,n}{}/\log(n)$ does not depend on $n$ and increases cubically in $k$ we have that
\begin{align*}
{\textstyle\frac{k_0(k_0+2)-1}{2}} +\textstyle{ \frac{d_{k_0,\log n,n}}{\log n}} \;&=\;
{\textstyle\frac{k_0(k_0+2)-1}{2}} + \textstyle{\frac{\text{pen}(k_0,n)}{\log n}} \\
& \quad- {\textstyle\frac{\text{pen}(\log n,n)}{\log n}} \\
&<\;  -3
\end{align*}
for all sufficiently large $n$. Thus 
 summing  \eqref{main-ineq}  in $n$ we obtain
\begin{equation*}
\sum\limits_{n=1}^{\infty}\,\textstyle e^{ c_{k_0}} \,n\,\exp\Bigl\lbrace \log n \,\Bigl[  
 {\textstyle \frac{k_0(k_0+2)-1}{2} }+  d_{k_0,\log n, n} \Bigr]\Bigr\rbrace \;<\; \infty
\end{equation*}
and the result follows from the first Borel Cantelli lemma.
\end{proof}

\subsection{Non-underestimation}\label{non-under}
\renewcommand{\theta}{\pi,P}

In this subsection we deal with the proof of the non-underestimation of $\KThat(\a)$. The main result of this section is the following

\begin{proposition}\label{prop:no_underestimation}
Let $\a$ be a sample of size $n$ from a SBM of order $k_0$ with parameters $(\pi^0,P^0)$.
 Then, the $\hat{k}_{\mbox{\tiny{KT}}}(\a)$ order estimator defined in \eqref{est_kt} 
  does not underestimate $k_0$, eventually almost surely when $n\to\infty$. 
\end{proposition}

In order to prove this result we need  Lemmas~\ref{lemma:ratio_underfitting_rho} and \ref{lemma:ratio_underfitting2}
below, that explore limiting properties of the under-fitted model. That is we handle with the problem of fitting  a SBM of order $k_0$ in the parameter space $\Theta^{k_0-1}$.

An intuitive construction of a ($k-1$)-block model  from a $k$-block model is obtained by merging two given blocks. This merging can be implemented in several ways, but  here we consider the construction given in \cite{wang2017likelihood}, with the difference that instead of using the sample block proportions we use the limiting  distribution $\pi$ of the original $k$-block model. 

Given  $(\pi,P)\in \Theta^{k}$ 
we define the merging operation $M_{a,b}(\pi,P) = (\pi^*,P^*)\in \Theta^{k-1}$ which combines blocks with labels $a$ and $b$. For ease of exposition we only  show the explicit  definition for the case $a=k-1$ and $b=k$. 
In this case, the merged distribution $\pi^*$ is given by 
\begin{align}\label{eq:merging1}
\pi^*_i &= \pi_i\,  \hspace{2cm} \text{for } 1 \leq i \leq k-2\,,\\
\pi^*_{k-1} &= \pi_{k-1}+\pi_{k}\,.\notag
\end{align}
On the other hand, the merged matrix $P^*$
is obtained as 
\begin{align}\label{eq:merging}
&P^*_{l,r} = P_{l,r} \hspace{4cm} \text{for } 1 \leq l,r \leq k-2\,,\notag\\[2mm]
&P^*_{l,k-1} = \frac{\pi_l\pi_{k-1}P_{l,k-1}+ \pi_l\pi_{k}P_{l,k}}{\pi_l\pi_{k-1}+ \pi_l\pi_{k}} \hspace{0.6cm} \text{for } 1 \leq l \leq k-2\,,\\[2mm]
&P^*_{k-1,k-1} = \frac{\pi_{k-1}^2P_{k-1,k-1}+ 2\pi_{k-1}\pi_{k}P_{k-1,k} + \pi_{k}^2P_{k,k}}{\pi_{k-1}^2+ 2\pi_{k-1}\pi_{k}+ \pi_{k}^2} \,.\notag
\end{align}
For arbitrary $a$ and $b$ the definition is obtained by permuting the labels. 

Given $\a$  originated from the SBM of order $k_0$ and parameters $(\pi^0,P^0)$,  
we define the profile likelihood estimator of the label assignment under the ($k_0-1$)-block model as
\begin{equation}\label{profileest}
\zs \; =\; \argmax \limits_{\z \in [k_0-1]^n}\;\sup\limits_{(\theta) \in \Theta^{k_0-1}} \P(\z, \a)\,.
\end{equation}

The next lemmas show that the  logarithm of the ratio between the maximum  likelihood under the true order $k_0$  and the maximum profile likelihood under the under-fitting $k_0-1$ order model is bounded from below by a function growing faster than $n\log n$, eventually almost surely when $n\to\infty$. Each lemma consider one of the two possible regimes $\rho_n=\rho>0$ (dense regime) or  $\rho_n \rightarrow 0$ at a rate $n\rho_n \rightarrow \infty$ (sparse regime).

\begin{lemma}[dense regime]\label{lemma:ratio_underfitting_rho} Let $(\z,\a)$ be a sample of size $n$ from a SBM of order $k_0$ with parameters $(\pi^0,P^0)$,  with $P^0$ not depending on $n$. Then there exist $r,s \in [k_0]$ such that for $(\pi^*,P^*) = M_{r,s}(\pi^0,P^0)$ we have that almost surely
\begin{align}\label{eq:lim_ratio_underfitting_rho}
&\liminf\limits_{n\rightarrow \infty}\;\dfrac{1}{n^2}\log\dfrac{\sup_{(\theta) \in \Theta^{k_0}}\P(\z,\a)}{\sup_{(\theta) \in \Theta^{k_0-1}}\P(\zs,\a)} \notag\\
&\quad \geq\; { \frac 12}\Biggl[ \sum_{ 1\leq a,b\leq k_0}\pi^0_a\pi^0_b\,\gamma(P^0_{ab}) - \!\sum_{ 1\leq a,b\leq k_0-1}\pi^*_a \pi^*_b\,\gamma(P^*_{a,b})\Biggr]\notag\\
&\quad >\;0\,,
\end{align}
where $\gamma (x)=x\log x + (1-x)\log (1-x)$.
\end{lemma}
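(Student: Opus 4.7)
I would split the log-ratio into a lower bound for the numerator (through the SLLN) and an upper bound for the denominator (through a uniform LLN across the exponentially many label assignments $\z\in[k_0-1]^n$), and identify the gap as a strictly positive quantity using the merging operation $M_{r,s}$ together with Jensen's inequality.

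\emph{Lower bound on the numerator.} With the true labels $\z=\zo$ fixed, the supremum over $\Theta^{k_0}$ of the expression in \eqref{def-prob} is attained at the closed-form MLE $\hat\pi_a=n_a/n$, $\hat P_{a,b}=O_{a,b}/n_{a,b}$. Substituting and taking logarithms,
\[
\log\sup_{(\pi,P)\in\Theta^{k_0}}\P(\z,\a)=\sum_{a=1}^{k_0}n_a\log\tfrac{n_a}{n}+\tfrac12\sum_{a,b=1}^{k_0}n_{a,b}\,\gamma(O_{a,b}/n_{a,b}).
\]
The first sum is $O(n)$, and conditional on $\zo$ the entries of $\a$ are independent Bernoullis, so the strong law yields $n_a/n\to\pi^0_a$, $n_{a,b}/n^2\to\pi^0_a\pi^0_b$ and $O_{a,b}/n_{a,b}\to P^0_{a,b}$ almost surely, giving $\liminf_n n^{-2}\log\sup\P(\zo,\a)\ge\tfrac12\sum_{a,b}\pi^0_a\pi^0_b\gamma(P^0_{a,b})$.

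\emph{Upper bound on the denominator.} For each $\z\in[k_0-1]^n$ introduce the empirical confusion matrix $\alpha_{a,r}(\z)=n^{-1}|\{i:z^0_i=a,\,z_i=r\}|$, the merged proportions $\pi^\alpha_r=\sum_a\alpha_{a,r}$, and the predicted merged probabilities $\tilde P^\alpha_{r,s}=(\sum_{a,b}\alpha_{a,r}\alpha_{b,s}P^0_{a,b})/(\pi^\alpha_r\pi^\alpha_s)$. Plugging the MLE back into $\log\P(\z,\a)$ and rewriting the block counts in terms of $\alpha(\z)$ gives, up to a deterministic $O(1/n)$ term,
\[
n^{-2}\log\sup_{(\pi,P)\in\Theta^{k_0-1}}\P(\z,\a)=\tfrac12 G(\alpha(\z))+\xi_n(\z),\quad G(\alpha)=\sum_{r,s=1}^{k_0-1}\pi^\alpha_r\pi^\alpha_s\,\gamma(\tilde P^\alpha_{r,s}),
\]
where $\xi_n(\z)$ is controlled by the deviations $|O_{r,s}/n_{r,s}-\tilde P^\alpha_{r,s}|$. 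Hoeffding's inequality applied conditionally on $\zo$ bounds each such deviation by $2\exp(-cn^2\delta^2)$, and a union bound over the $(k_0-1)^n$ assignments still leaves a summable tail, so by Borel--Cantelli $\sup_\z|\xi_n(\z)|\to 0$ a.s. Compactness of the $\alpha$-polytope and continuity of $G$ then give $\limsup_n n^{-2}\log\sup\P(\zs,\a)\le\tfrac12\max_\alpha G(\alpha)$ a.s.

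\emph{Maximiser of $G$ and strict positivity.} Writing $\alpha_{a,r}=\pi^0_a p_a(r)$ with $p_a$ a probability on $[k_0-1]$, the feasible set is a product of simplices whose vertices correspond to deterministic assignments $\sigma:[k_0]\to[k_0-1]$. Using the joint convexity of the perspective map $(u,v)\mapsto u\gamma(v/u)$ together with a coordinate-wise reduction in the $p_a$'s, the maximum of $G$ over the polytope is attained at such a vertex; by pigeonhole any such $\sigma$ identifies exactly two true labels $r\ne s$, and the corresponding value equals $\sum_{r',s'}\pi^{*}_{r'}\pi^{*}_{s'}\gamma(P^{*}_{r',s'})$ with $(\pi^{*},P^{*})=M_{r,s}(\pi^0,P^0)$, by direct comparison with \eqref{eq:merging1}--\eqref{eq:merging}. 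Strict convexity of $\gamma$ applied term-by-term in the $\pi^0$-weighted convex combinations defining $P^{*}$ then forces $\sum_{r',s'}\pi^{*}_{r'}\pi^{*}_{s'}\gamma(P^{*}_{r',s'})<\sum_{a,b}\pi^0_a\pi^0_b\gamma(P^0_{a,b})$, since equality would require rows $r$ and $s$ of $P^0$ to coincide, contradicting the minimality of $k_0$. Combining the three estimates yields the lemma. The delicate step is the uniform control of $\xi_n(\z)$---exponentially many $\z$'s are beaten by the $\exp(-cn^2\delta^2)$ Hoeffding tail---while the main structural obstacle is justifying that the non-convex functional $G$ is indeed maximised at a vertex of the $\alpha$-polytope.
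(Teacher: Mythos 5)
Your overall strategy is the same as the paper's: SLLN for the numerator along the true labeling, reduction of the denominator to an optimization over confusion matrices/couplings between $\pi^0$ and a $(k_0-1)$-dimensional distribution, identification of the optimum with a two-block merge $M_{r,s}(\pi^0,P^0)$, and strict positivity via Jensen applied to the convex function $\gamma$ using the fact that $P^0$ has no two identical columns. The differences are in how much you open up two steps that the paper treats as black boxes. First, where the paper simply asserts that $\limsup_n n^{-2}\log\sup\P(\zs,\a)$ equals $\tfrac12\sum\tilde\pi_a\tilde\pi_b\gamma(\tilde P_{a,b})$ for \emph{some} $(\tilde\pi,\tilde P)\in\Theta^{k_0-1}$ (a statement that needs justification, since $\zs$ depends on $n$ and on the data), you supply the missing uniform law of large numbers via Hoeffding plus a union bound over the $(k_0-1)^n$ assignments; this is the right mechanism and is more explicit than the paper. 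Second, where the paper cites Lemma~A.2 of \cite{wang2017likelihood} for the fact that the coupling functional is maximized at a two-block merge, you sketch a proof from scratch.

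Two caveats on your sketches. In the uniform bound, the Hoeffding tail for $O_{r,s}/n_{r,s}$ is $\exp(-c\,n_{r,s}\delta^2)$, not $\exp(-c\,n^2\delta^2)$; for assignments $\z$ with a very small block, $n_{r,s}$ can be of order $n$ and the tail no longer beats the $(k_0-1)^n$ union bound. You need to treat such blocks separately, observing that $\gamma$ is bounded so blocks with $n_{r,s}\leq\epsilon n^2$ contribute at most $\epsilon n^2\log 2$ to the log-likelihood. In the vertex argument, joint convexity of the perspective map $(u,v)\mapsto u\,\gamma(v/u)$ does not directly yield convexity of $G$ in $\alpha$, because $u=\pi^\alpha_r\pi^\alpha_s$ and $v=(QP^0Q^T)_{r,s}$ are bilinear, not linear, in the coupling; even the restriction to a single row $Q(a,\cdot)$ is not affine, due to the diagonal terms $P^0_{a,a}Q(a,r)Q(a,s)$. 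This is exactly the content of the cited Lemma~A.2, and your sketch (which you candidly flag as the main obstacle) does not yet close it; also, a vertex of the product of simplices is an arbitrary map $\sigma:[k_0]\to[k_0-1]$, which may merge more than two labels, so you additionally need a monotonicity-of-merging step to reduce to a two-block merge. With those two points repaired (or with the second replaced by the citation the paper uses), your argument is correct and matches the paper's.
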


\begin{proof}
Given $k$ and ${\bf\bar z}_n \in [k]^n$ define the empirical probabilities
\begin{equation}\label{empprob}
\begin{split}
\hat{\pi}_a({\bf\bar z}_n) &= \dfrac{n_a({\bf\bar z}_n)}{n}\, , \hspace{2cm} 1\leq a \leq  k\\
 \hat{P}_{a,b}({\bf\bar z}_n,\a) &= \dfrac{o_{a,b}({\bf\bar z}_n,\a)}{n_{a,b}({\bf\bar z}_n)}\, , \hspace{0.6cm} 1\leq a\leq b \leq  k\,.
\end{split}
\end{equation}
In the case $k=k_0$ the maximum log-likelihood function is given by  
\begin{equation*}\label{eq:complete_z0}
\begin{split}
 \log&\sup_{(\theta) \in \Theta^{k_0}}\P(\z,\a) = n \sum\limits_{1\leq a\leq k_0}{\hat{\pi}_a(\z)} \log \hat{\pi}_a(\z)\\
&\qquad\qquad+\sum\limits_{1\leq a\leq b\leq k_0} n_{a,b}(\z)\gamma( \hat{P}_{a,b}(\z,\a))\,.
\end{split}
\end{equation*}
Using that $n_{a,b}(\z)=n_a(\z)n_b(\z)$ for $a\neq b$ and ${ n_{a,a}(\z)=\frac12n_a(\z)(n_a(\z)-1)}$, 
dividing by $n^2$ and using  the Strong Law of Large Numbers 
we have that almost surely 
\begin{equation}\label{eq:lim_zero}
\begin{split}
\lim\limits_{n \rightarrow \infty}&\;\frac{1}{n^2}\log\sup_{(\pi,P) \in \Theta^{k_0}}\P(\z,\a) \\
&\qquad\qquad\;=\; { \frac 12} \sum\limits_{ 1\leq a,b\leq k_0}\pi^0_a\pi^0_b\,\gamma(P^0_{a,b})\, .
\end{split}
\end{equation}
Similarly for $k_0-1$ and $\zs \in [k_0-1]^n$  we have that almost surely 
\begin{equation}\label{eq:lim_tilde}
\begin{split}
&\limsup\limits_{n \rightarrow \infty}\;\dfrac{1}{n^2}\,\log \sup\limits_{(\theta) \in \Theta^{k_0-1}}\P(\zs, \a)\\
& \qquad\qquad=  { \frac 12} \sum\limits_{ 1\leq a,b\leq k_0-1}\tilde\pi_a\tilde\pi_b\,\gamma(\tilde P_{a,b})\,,
\end{split}
\end{equation}
for some $(\tilde\pi,\tilde P) \in \Theta^{k_0-1}$.
Combining \eqref{eq:lim_zero} and \eqref{eq:lim_tilde} we have that almost surely 
\begin{equation}\label{eq:liminf_eq}
\begin{split}
 &\liminf\limits_{n\rightarrow \infty}\;\dfrac{1}{n^2}\log\dfrac{\sup_{(\theta) \in \Theta^{k_0}}\P(\z,\a)}{\sup_{(\theta) \in \Theta^{k_0-1}}\P(\zs,\a)}\\[2mm]
&=\;  { \frac 12}\sum\limits_{ 1\leq a,b\leq k_0} \pi^0_a\pi^0_b\,\gamma(P^0_{a,b}) -  {\frac 12} \sum\limits_{1\leq a,b\leq k_0-1}\tilde\pi_a \tilde\pi_b\, \gamma( \tilde P_{a,b})\,.
\end{split}
\end{equation}
To obtain a lower bound for  \eqref{eq:liminf_eq} we need to compute $(\tilde\pi, \tilde P)$ that minimizes the right-hand side. 
This is equivalent to obtain $(\tilde\pi, \tilde P) \in \Theta^{k_0-1}$ that maximizes the second term  
\begin{equation}\label{eq:function_tilde}
\sum\limits_{1\leq a,b\leq k_0-1}\tilde\pi_a \tilde\pi_b\, \gamma( \tilde P_{a,b})\,.
\end{equation}
Denote by $(\bold{\widetilde{Z}}_n,\bold{\widetilde{X}}_{n\times n})$ a $(k_0-1)$-order SBM with distribution  $(\tilde\pi,\tilde P)$.
By definition
\[
\tilde P_{\tilde a,\tilde b} = \frac{P(\tilde X_{i,j}=1,\tilde Z_i=\tilde a,\tilde Z_j=\tilde b)}{P(\tilde Z_i=\tilde a,\tilde Z_j=\tilde b)}\,.
\]
Observe that when $\bold{\widetilde{X}}_{n\times n}=\A$,  the numerator equals
\begin{align*}
&\sum\limits_{a=1}^{k_0}\sum\limits_{b=1}^{k_0} \Scale[0.85]{ P(X_{i,j}=1|Z_i=a,Z_j= b)P(Z_i=a,Z_j= b,\tilde Z_i=\tilde a,\tilde Z_j= \tilde b)}\\
& =\sum_{a=1}^{k_0}\sum_{b=1}^{k_0}\Scale[0.85]{P(Z_i=a,\tilde Z_i=\tilde a)\,P^0_{a,b}\, P(Z_j= b,\tilde Z_j= \tilde b)} \\[2mm]
&= (QP^0Q^T)_{\tilde{a},\tilde{b}}\,,
\end{align*}
where $Q(\tilde a,a)$ denotes  a joint distribution on $[k_0-1]\times [k_0]$ (a coupling) with marginals  $\tilde\pi$ and $\pi^0$, respectively. 
Similarly, the denominator can be written as 
\begin{align*}
\sum_{a=1}^{k_0}\sum_{b=1}^{k_0} \Scale[0.85]{P(Z_i=a,\tilde Z_i=\tilde a)P(Z_j= b,\tilde Z_j= \tilde b)}=(Q(\bold{1}\bold{1}^T)Q^T)_{\tilde{a},\tilde{b}}\,,
\end{align*}
where $\bold{1}\bold{1}^T$ denotes the matrix with dimension $k_0\times k_0$ and all entries equal to 1.

 Then we can rewrite \eqref{eq:function_tilde} as
\begin{equation}\label{eq:function_tilde_matrix}
\sum\limits_{ 1\leq a, b\leq k_0-1}  (Q(\bold{1}\bold{1}^T)Q^T)_{a, b}\,\gamma 
\left[  \dfrac{(QP^0Q^T)_{a,b}}{ (Q(\bold{1}\bold{1}^T)Q^T)_{a,b}}  \right]\,.
\end{equation}
Therefore, finding  a pair $(\tilde \pi, \tilde P)$ maximizing \eqref{eq:function_tilde} is equivalent to finding an optimal coupling $Q$ maximizing \eqref{eq:function_tilde_matrix}.  In \cite{wang2017likelihood} the authors proved that there exist
 $r,s \in [k_0]$ such that \eqref{eq:function_tilde_matrix} achieves its maximum 
 at $(\pi^*,P^*) = M_{r,s}(\pi^0,P^0)$, see Lemma~A.2 there. This concludes the proof of the first inequality in \eqref{eq:lim_ratio_underfitting_rho}. In order to prove the second strict inequality in \eqref{eq:lim_ratio_underfitting_rho}, we consider for convenience and without loss of generality,  $r=k_0-1$ and $s=k_0$ (the other cases can be handled by a permutation of the labels).
Notice that in the right-hand side of  \eqref{eq:liminf_eq}, with $(\tilde\pi,\tilde P)$
substituted by the optimal value $M_{k_0-1,k_0}(\pi^0,P^0)$ defined by 
\eqref{eq:merging1} and \eqref{eq:merging}, all the terms with $1\leq a,b\leq k_0-2$
cancel. Moreover, as $\gamma$ is a convex function, Jensen's inequality implies 
that
\begin{equation}\label{jensen1}
\begin{split}
\pi^*_a\pi^*_{k_0-1}&\gamma(P^*_{a,k_0-1}) \;\leq\;\\
 &\pi^0_a\pi^0_{k_0-1}\gamma(P^0_{a,k_0-1})+ \pi^0_a\pi^0_{k_0}\gamma(P^0_{a,k_0})
 \end{split}
\end{equation}
for all $a=1,\dotsc, k_0-2$ and similarly 
\begin{equation}\label{jensen2}
(\pi^*_{k_0-1})^2\gamma(P^*_{k_0-1,k_0-1}) \;\leq\; \sum_{ k_0-1\leq a,b\leq k_0} \pi^0_a \pi^0_b\gamma(P^0_{a,b})\,.
\end{equation}
The equality holds for all $a$  in \eqref{jensen1} and in   \eqref{jensen2}  simultaneously  if and only if 
\[
P^0_{a,k_0} \;=\; P_{a,k_0-1} \qquad\text{ for all }a=1,\dotsc, k_0\,,
\]
in which case the matrix $P^0$ would have two identical columns, contradicting the fact that the sample $(\z,\a)$ originated from a SBM with order $k_0$.  Therefore the strict inequality must hold  in \eqref{jensen1} for at least one $a$ or in \eqref{jensen2}, showing that the second inequality in \eqref{eq:lim_ratio_underfitting_rho} holds. 
\end{proof} 

\begin{lemma}[sparse regime]\label{lemma:ratio_underfitting2} 
Let $(\z,\a)$ be a sample of size $n$ from a SBM of order $k_0$ with parameters 
$(\pi^0,\rho_n S^0)$, where $\rho_n \rightarrow 0$ at a rate $n\rho_n \rightarrow \infty $. Then there exist $r,s \in [k_0]$ such that for $(\pi^*,P^*) = M_{r,s}(\pi^0,S^0)$ we have that almost surely 
\begin{align}\label{eq:lim_ratio_underfitting2}
&\liminf\limits_{n\rightarrow \infty} \;\dfrac{1}{\rho_nn^2}\log\dfrac{\sup_{(\theta) \in \Theta^{k_0}}\P(\z,\a)}{\sup_{(\theta) \in \Theta^{k_0-1}}\P(\zs,\a)} \notag\\
&\qquad \geq\;\;{ \frac 12}\Biggl[ \sum_{a,b=1}^{k_0}\pi^0_a\pi^0_b\,\tau(S^0_{a,b}) - \sum_{a,b=1}^{k_0-1}\pi^*_a \pi^*_b\,\tau(P^*_{a,b})\Biggr]\\
&\qquad >\;0\,,\notag
\end{align}
where $\tau(x)=x\log x - x$.
\end{lemma}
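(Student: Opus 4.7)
I would mirror the structure of the proof of Lemma~\ref{lemma:ratio_underfitting_rho}, with two modifications arising from the sparse scaling. The crucial new ingredient is a Taylor expansion near zero: for $s\in[0,1]$,
\begin{equation*}
\gamma(\rho_n s) \;=\; \rho_n s\log(\rho_n s) + (1-\rho_n s)\log(1-\rho_n s) \;=\; \rho_n\bigl[s\log\rho_n + \tau(s)\bigr] + O(\rho_n^2)\,.
\end{equation*}
Under $n\rho_n\to\infty$, Bernstein's inequality yields $\hat P_{a,b}(\z,\a)/\rho_n\to S^0_{a,b}$ almost surely for each $a,b$. Substituting this expansion into \eqref{eq:complete_z0} and dividing by $\rho_n n^2$, the contributions $n\sum\hat\pi_a\log\hat\pi_a = O(n)$ and $n\sum\hat\pi_a\gamma(\hat P_{a,a}) = O(n\rho_n|\log\rho_n|)$ both become $o(1)$, producing almost surely
\begin{equation*}
\frac{1}{\rho_n n^2}\log\sup_{(\theta)\in\Theta^{k_0}}\P(\z,\a) \;=\; \frac{\log\rho_n}{2}\sum_{a,b=1}^{k_0}\pi^0_a\pi^0_b\,S^0_{a,b} + \frac{1}{2}\sum_{a,b=1}^{k_0}\pi^0_a\pi^0_b\,\tau(S^0_{a,b}) + o(1)\,.
\end{equation*}
A parallel expansion yields, for some $(\tilde\pi,\tilde S)\in\Theta^{k_0-1}$ arising from the profile maximizer (with the rescaling $\tilde S_{a,b} = \tilde P_{a,b}/\rho_n$), an analogous upper bound on $\rho_n^{-1}n^{-2}\log\sup_{\Theta^{k_0-1}}\P(\zs,\a)$.

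The apparently divergent $\log\rho_n$ contributions cancel in the log-ratio. Indeed, as in the proof of Lemma~\ref{lemma:ratio_underfitting_rho}, writing $\tilde P_{\tilde a,\tilde b}=(QP^0Q^T)_{\tilde a,\tilde b}/(\tilde\pi_{\tilde a}\tilde\pi_{\tilde b})$ for a coupling $Q$ of $\pi^0$ and $\tilde\pi$ gives $\sum_{\tilde a,\tilde b}\tilde\pi_{\tilde a}\tilde\pi_{\tilde b}\tilde S_{\tilde a,\tilde b}=\sum_{a,b}\pi^0_a\pi^0_b S^0_{a,b}$ for \emph{any} choice of $Q$. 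What remains after the cancellation is
\begin{equation*}
\liminf_{n\to\infty}\frac{1}{\rho_nn^2}\log\frac{\sup_{\Theta^{k_0}}\P(\z,\a)}{\sup_{\Theta^{k_0-1}}\P(\zs,\a)} \;\geq\; \frac{1}{2}\Bigl[\sum_{a,b=1}^{k_0}\pi^0_a\pi^0_b\,\tau(S^0_{a,b}) - \sum_{a,b=1}^{k_0-1}\tilde\pi_a\tilde\pi_b\,\tau(\tilde S_{a,b})\Bigr]\,.
\end{equation*}
Minimising the second sum over $(\tilde\pi,\tilde S)\in\Theta^{k_0-1}$ reduces, via the same coupling calculation leading to \eqref{eq:function_tilde_matrix}, to a variational problem over $Q$ with $\tau$ replacing $\gamma$. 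Since Lemma~A.2 of \cite{wang2017likelihood} only invokes the convexity of the function in question, and $\tau''(x)=1/x>0$ on $(0,1]$, the optimiser is again of the form $(\pi^*,S^*)=M_{r,s}(\pi^0,S^0)$ for some $r,s\in[k_0]$.

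The strict inequality in \eqref{eq:lim_ratio_underfitting2} then follows from Jensen applied to $\tau$ exactly as in \eqref{jensen1}--\eqref{jensen2}: simultaneous equality would force columns $r$ and $s$ of $S^0$ (hence of $P^0=\rho_n S^0$) to coincide, contradicting the assumption that the SBM has order $k_0$. The main obstacle I expect is the concentration/Taylor step: the expansion above must be controlled \emph{uniformly} over the exponentially many label assignments $\zs\in[k_0-1]^n$ defining the profile likelihood, and in the sparse regime $\hat P^*_{a,b}(\zs,\a)$ can approach the singularity of $\gamma$ at $0$ for atypical $\zs$. Handling this requires a careful union bound with Bernstein-type tail estimates, leveraging $n\rho_n\to\infty$ to make the Taylor remainder and the deviations of $\hat P^*_{a,b}/\rho_n$ uniformly negligible; this bookkeeping is more delicate than in the dense regime where $\gamma$ stays bounded away from its singularities.
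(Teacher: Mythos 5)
Your proposal follows essentially the same route as the paper: the expansion $\gamma(\rho_n s)=\rho_n[\,s\log\rho_n+\tau(s)\,]+O(\rho_n^2)$ is exactly the Bickel--Chen expansion the paper invokes, the divergent $\log\rho_n$ contributions cancel in the log-ratio (the paper does this slightly more directly, observing that the coefficient is $E_n/n^2$ with $E_n$ twice the edge count, which is label-invariant and hence literally identical in numerator and denominator), and the reduction to the merged parameters via Wang--Bickel's Lemma~A.2 plus Jensen's inequality for the convex function $\tau$ is carried out verbatim. The uniformity-over-label-assignments issue you flag as the main obstacle is a fair concern, but the paper's own proof does not address it either, simply asserting the existence of a limiting $(\tilde\pi,\tilde S)$ for the profile maximizer.
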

\begin{proof}
This proof follows the same arguments used in the proof of 
Lemma~\ref{lemma:ratio_underfitting_rho},  but as in this case $P^0$ decreases to 
0 some limits must be handled differently.  
As before we have for $k_0$ and $\z$ that 

\begin{equation}\label{baseq1}
\begin{split}
 \log\sup_{(\theta) \in \Theta^{k_0}}&\P(\z,\a) = 
\,n \sum\limits_{1\leq a\leq k_0}{\hat{\pi}_a(\z)} \log \,\hat{\pi}_a(\z) \\
&+ \sum\limits_{1\leq a\leq b\leq k_0} n_{a,b}(\z)\gamma( \hat{P}_{a,b}(\z,\a) )\,.
\end{split}
\end{equation}
For $\rho_n \rightarrow 0$ we have that 
\[
\gamma(\rho_n x) = \rho_n(x\log x - x)+ x\rho_n\log \rho_n+ O(\rho_n^2 x^2)\,,
\]
see \cite[Supplementary material]{bickel2009nonparametric}. Therefore 
\begin{align}\label{eq:bickel_2009}
& \sum\limits_{ 1\leq a\leq b\leq k_0} n_{a,b}(\z)\gamma( \hat{P}_{a,b}(\z,\a) ) \notag \\
& \quad = \rho_n \sum\limits_{ 1\leq a\leq  b\leq k_0} n_{a,b}(\z)\,\tau\Bigl(  \frac{\hat{P}_{a,b}(\z,\a)}{\rho_n} \Bigr)\notag \\ 
& \qquad + E_n\log \rho_n + O\Bigl( \hat{P}_{a,b}(\z,\a)^2\Bigr)\,,
\end{align}
where $E_n=\sum\limits_{ 1\leq a\leq b\leq k_0} o_{ab}(\z,\a) $ (the total number of edges in the graph) and $\tau(x)=x\log x - x$. A similar expression can be obtained for 
$k_0-1$ and $\zs$. Thus, dividing by $\rho_nn^2$ and taking limits 
we have that
there must exists some $(\tilde \pi, \tilde S)\in \Theta^{k_0-1}$ such that almost surely 
\begin{equation}\label{eq:liminf_eq2}
\begin{split}
&\liminf\limits_{n\rightarrow \infty}\dfrac{1}{\rho_nn^2}\log\dfrac{\sup_{(\theta) \in \Theta^{k_0}}\P(\z,\a)}{\sup_{(\theta) \in \Theta^{k_0-1}}\P(\zs,\a)}\\
&\; = { \frac 12}\Biggl[\sum\limits_{1\leq a, b\leq k_0} \pi^0_a\pi^0_b\,\tau(S^0_{a,b}) - \!\!
 \sum\limits_{1\leq a, b\leq k_0-1}\tilde{\pi}_a\tilde{\pi}_b\, \tau( \tilde{S}_{a,b})\Biggr]\,.
\end{split}
\end{equation}
As before, we want to obtain $(\tilde{\pi}, \tilde{S})\in \Theta^{k_0-1}$ that maximizes the second term in the right-hand side of the equality above. 
The rest of the proof here is analogous to that of  Lemma~\ref{lemma:ratio_underfitting_rho}, 
by observing that $\tau$ is also a convex function and therefore the difference in \eqref{eq:lim_ratio_underfitting2}  is lower bounded by 0. 
%
%
\end{proof}

\begin{proof}[Proof of Proposition~\ref{prop:no_underestimation}]
To prove that $\hat{k}_{\mbox{\tiny{KT}}}(\a)$ does not underestimate $k_0$ it is enough to show that for all $k < k_0$
\[
\log \KT{k_0}{\a} - \pena{k_0}{n} \; >\;  \log \KT{k}{\a} - \pena{k}{n}
\]
eventually almost surely when $n\to\infty$.
As 
\[
 \lim_{n \rightarrow \infty}\;\dfrac{1}{\rho_n n^2}  \Bigl[\pena{k_0}{n} - \pena{k}{n}
  \Bigr] \;=\; 0
\]
this is equivalent to show that
\begin{equation*}\label{eq:eq_lim_kt}
 \liminf_{n \rightarrow \infty}\;\dfrac{1}{\rho_n n^2}\, \log \dfrac{\KT{k_0}{\a}}{\KT{k}{\a} }  \;>\; 0\,.
\end{equation*}
First note that the logarithm above can be  written as 
\begin{align*}
\log \,\dfrac{\KT{k_0}{\a}}{\KT{k}{\a} }\;= \;&
\log \,\dfrac{\KT{k_0}{\a}}{\sup_{(\theta) \in \Theta^{k_0}}\P(\a)} \\[2mm]
&+ \log \,\dfrac{\sup_{(\theta) \in \Theta^{k_0}}\P(\a)}{\KT{k}{\a}}  \,.
\end{align*}
Using Proposition~\ref{prop:razao_L_KT} we have that the first term in the right-hand side can be bounded below by
\begin{equation}\label{proof:overestimation_ineq1}
\Scale[0.9]{\log \,\dfrac{\KT{k_0}{\a}}{{\sup_{(\theta) \in \Theta^{k_0}}\P(\a)}} \;\geq\; - \textstyle\frac{k_0(k_0+2)-1}{2} \log n - c_{k_0}\,.}
\end{equation}
On the other hand, the second term can be lower-bounded by using  
\begin{align}\label{proof:overestimation_ineq2}
&\Scale[0.9]{\log \,\dfrac{\sup_{(\theta) \in \Theta^{k_0}}\P(\a)}{\KT{k}{\a}}}\notag\\
&\Scale[0.9]{= \;\log \dfrac{\sup_{(\theta) \in \Theta^{k_0}}\P(\a)}{{\sup_{(\theta) \in \Theta^{k}}\P(\a)}} \;+\; \log \dfrac{{\sup_{(\theta) \in \Theta^{k}}\P(\a)}}{\KT{k}{\a}}}\notag\\
&\Scale[0.9]{\geq\; \log\dfrac{\sup_{(\theta) \in \Theta^{k_0}}\P(\a)}{{\sup_{(\theta) \in \Theta^{k}}\P(\a)}}} \,.
\end{align}
By combining \eqref{proof:overestimation_ineq1} and \eqref{proof:overestimation_ineq2} we obtain 
\begin{align*}
\dfrac{1}{\rho_n n^2}\log &\dfrac{\KT{k_0}{\a}}{\KT{k}{\a} } \;\geq\; - \textstyle\frac{k_0(k_0+2)-1}{2} \dfrac{\log n}{\rho_n n^2} - \dfrac{c_{k_0}}{\rho_n n^2}\\
&\qquad\qquad\quad +\dfrac{1}{\rho_n n^2}\log\dfrac{\sup_{(\theta) \in \Theta^{k_0}}\P(\a)}{{\sup_{(\theta) \in \Theta^{k}}\P(\a)}}\,.
\end{align*}
Now, as  $n \rho_n\rightarrow \infty$ it suffices to show that for $k < k_0$, almost surely we have 
\begin{equation}\label{eq:lim_razao_proban}
\liminf\limits_{n \rightarrow \infty}\,\dfrac{1}{\rho_n n^2}\log\dfrac{\sup_{(\theta) \in \Theta^{k_0}}\P(\a)}{{\sup_{(\theta) \in \Theta^{k}}\P(\a)}} \; >\; 0\,.
\end{equation}
We start with $k=k_0-1$. Using $\zs$ defined by \eqref{profileest}
we have that 
\begin{equation*}
\begin{split}
\Scale[0.9]{\sup\limits_{(\theta) \in \Theta^{k_0-1}}\P(\a)} &\; \Scale[0.9]{\leq\; \sum\limits_{\z \in [k_0-1]^n}\sup\limits_{(\theta) \in \Theta^{k_0-1}}\P(\a, \z)}\\
& \Scale[0.9]{\leq \;(k_0-1)^n\sup\limits_{(\theta) \in \Theta^{k_0-1}}\P(\zs,\a)}
\end{split}
\end{equation*}
and on the other hand
\begin{equation*}
\begin{split}
\sup_{(\theta) \in \Theta^{k_0}}\P(\a) &\;=\; \sup_{(\theta) \in \Theta^{k_0}}\;\sum\limits_{{\bf\bar z}_n \in [k_0]^n}\P({\bf\bar z}_n,\a)\\
& \geq \; \sup_{(\theta) \in \Theta^{k_0}}\P(\z,\a)\,.
\end{split}
\end{equation*}
Therefore  
\begin{align}\label{lim1}
&\log\;\dfrac{\sup_{(\theta) \in \Theta^{k_0}}\P(\a)}{{\sup_{(\theta) \in \Theta^{k_0-1}}\P(\a)}}\notag\\
&\;\geq\; \log\; \dfrac{\sup_{(\theta) \in \Theta^{k_0}}\P(\z,\a)}{\sup_{(\theta) \in \Theta^{k_0-1}}\P(\zs,\a)} - n\log\,(k_0-1)\,.
\end{align}
Using that $n \rho_n \rightarrow \infty$  on both regimes
 $\rho_n=\rho>0$ (dense regime) and $\rho_n \rightarrow 0$ (sparse regime) we have, by 
Lemmas~\ref{lemma:ratio_underfitting_rho} and \ref{lemma:ratio_underfitting2} 
that almost surely \eqref{eq:lim_razao_proban} holds for $k=k_0-1$. 
To complete the proof, let $k < k_0-1$. In this case we can write 
\begin{align*}
&\Scale[0.9]{\log\,\dfrac{\sup_{(\theta) \in \Theta^{k_0}}\P(\a)}{{\sup_{(\theta) \in \Theta^{k}}\P(\a)}}  = \log\,\dfrac{\sup_{(\theta) \in \Theta^{k_0}}\P(\a)}{{\sup_{(\theta) \in \Theta^{k_0-1}}\P(\a)}}}\\
&\hspace{4cm} \Scale[0.9]{+ \log\,\dfrac{{\sup_{(\theta) \in \Theta^{k_0-1}}\P(\a)}}{{\sup_{(\theta) \in \Theta^{k}}\P(\a)}}\,. }
\end{align*}
The first term in the right-hand side can be handled in the same way as 
in 
\eqref{lim1}. 
On the other hand the second term  is  non-negative because the maximum likelihood function is a non-decreasing function of the dimension of the model and  $k<k_0-1$. This finishes the proof of Proposition~\ref{prop:no_underestimation}.
\end{proof}

\section{Computation and simulations}\label{simulations}

Even though the definition of $\KT{k}{\cdot}$ avoids the maximization
over the parameter space needed in other approaches as maximum likelihood, 
the computation of this function is still demanding due to the sum over the set of all possible labels. One possible way to 
approximate the integrated likelihood in  $\KT{k}{\cdot}$  is through the variational Bayes EM algorithm proposed in 
\cite{latouche2012variational}, with computational complexity of order $O(k^2n^2)$. This algorithm is implemented in the {\tt R} package \textit{mixer}.

We compare the performance of the approximate KT estimator  on simulated data with the methods  of penalized maximum 
likelihood (PML) \cite{wang2017likelihood}, Beth-Hessian matrix with moment correction (BHMC)  \cite{le2015estimating} and
 the network cross-validation method (NCV) \cite{chen2018network}. All these methods are implemented in the {\tt  R} package 
 \textit{randnet}. 

 We simulated data from a model with $k_0=3$ communities and matrix of edge probabilities
given by $P_0=\rho S_0$, with  $\rho \in \{0.02,0.04,\dots,0.3\}$. 
The matrix $S_0$ has the diagonal entries equal to $2$ and the off-diagonal entries equal to $1$. 
Figure \ref{fig:methods} shows that the approximation of the KT estimator performs well for sparse networks 
and this estimator performs better than the other methods in the case of unbalanced networks. 

This can be due to the fact that methods like the PML estimator uses spectral clustering to detect 
the communities for different values of $k$ and then choose $k$ that maximizes the penalized maximum complete likelihood. 
In general, methods based on  spectral clustering have encountered difficulties to detect the true number of communities on 
unbalanced networks, see for example  \cite{le2015estimating,wang2017likelihood}. 
On the other hand, the KT estimator takes into account the likelihood given in \eqref{eq:likelihood_function}
 without the need to carry community detection prior to the estimation of $k$. 
The mean time to compute the KT estimator for a network with $n=500$ nodes was $2.465$ minutes, while for the methods  PML, BHMC and NCV the mean time was equal to $2.10$, $0.57$ and $2.66$ seconds, respectively.


 In this simulation study we also compare the performance of the KT approach as proposed in this paper with the maximization
of the same objective function without a penalty term. The latter corresponds exactly to
the Integrated Likelihood Variational Bayes (ILvb) approach proposed in \cite{latouche2012variational}. 
Table \ref{tab:table1} shows that the  ILvb performs slightly better than the penalized KT estimator. 
However, to our knowledge there is no theoretical result  concerning the consistency of the 
non-penalized KT estimator. 

\begin{table}[h!]
\centering
\begin{tabular}{cccccccc}
\hline\hline
 & \multicolumn{7}{ c }{$\pi=(1/3,1/3,1/3)$}\\
  \hline
 $\rho$ & 0.06 & 0.08 & 0.10 & 0.12 & 0.14 & 0.16 & 0.18 \\ 
 \hline
  KT & 0.00 & 0.00 & 0.41 & 0.98 & 1.00 & 1.00 & 1.00 \\ 
  ILvb & 0.00 & 0.15 & 0.54 & 0.99 & 1.00 & 1.00 & 1.00 \\[0.5cm]
  \hline\hline
 & \multicolumn{7}{ c }{$\pi=(0.5,0.2,0.3)$}\\
   \hline 
  $\rho$ & 0.06 & 0.08 & 0.10 & 0.12 & 0.14 & 0.16 & 0.18 \\ 
  \hline
  KT & 0.00 & 0.00 & 0.04 & 0.38 & 0.80 & 0.98 & 1.00 \\ 
  ILvb & 0.05 & 0.09 & 0.26 & 0.60 & 0.86 & 0.98 & 1.00 \\
\end{tabular}
\caption{Comparison between the proportion of correct $k_0$ of the Krichevsky-Trofimov (KT) 
and the Integrated Likelihood
Variational Bayes (ILvb). We consider the model with $n=300$, $k_0=3$ and $P_0=\rho S_0$, where $S_0$ has diagonal entries equal to $2$ and off-diagonal entries equal to $1$. }
\label{tab:table1}
\end{table}

\begin{figure*}[h!]
\centering
  \begin{subfigure}[b]{0.3\textwidth}
    \includegraphics[scale=0.47]{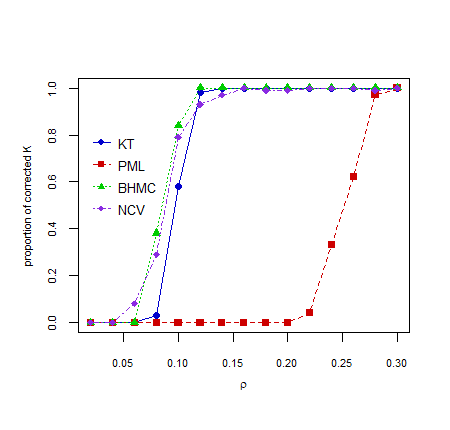}
    \caption{$n=300$ and $\pi=(1/3,1/3,1/3)$}
  \end{subfigure}
  \begin{subfigure}[b]{0.3\textwidth}
    \includegraphics[scale=0.47]{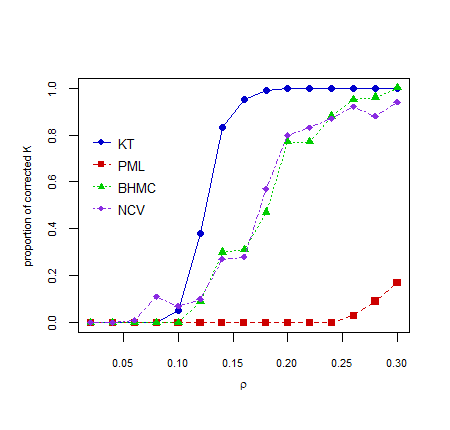}
    \caption{$n=300$ and $\pi=(0.2,0.5,0.3)$}
  \end{subfigure}
  
  \begin{subfigure}[b]{0.3\textwidth}
    \includegraphics[scale=0.47]{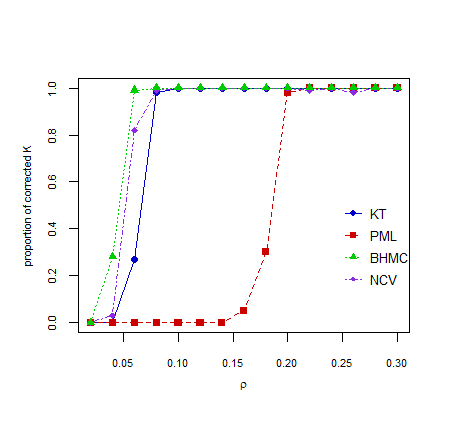}
    \caption{$n=500$ and $\pi=(1/3,1/3,1/3)$}
  \end{subfigure}
  \begin{subfigure}[b]{0.3\textwidth}
    \includegraphics[scale=0.47]{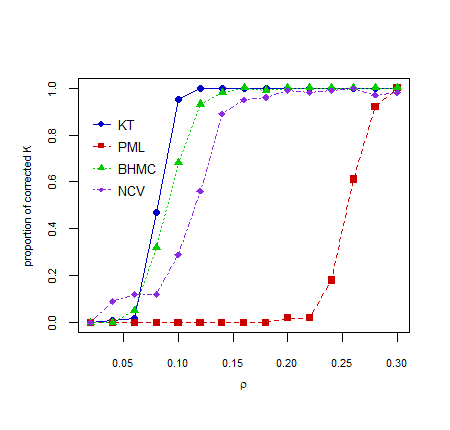}
    \caption{$n=500$ and $\pi=(0.2,0.5,0.3)$}
  \end{subfigure}
  \caption{Proportion of correct estimates for $k_0$ using the methods: Krichevsky-Trofimov (KT), Beth-Hessian matrix with moment correction (BHMC), network cross-validation (NCV) and penalized maximum likelihood (PML). We consider the model with $k_0=3$, $P_0=\rho S_0$, where $S_0$ has diagonal entries equal to $2$ and off-diagonal entries equal to $1$. 
The   tuning parameter in PML was chosen as  $\lambda=0.1$ and in KT as $\epsilon=1$. }
  \label{fig:methods}
\end{figure*}

\section{Discussion}\label{discussion}

In this paper we introduced a model selection procedure based on the Krichevsky-Trofimov mixture distribution for the number of communities in the Stochastic Block Model. We proved the almost sure convergence (strong consistency) of the penalized estimator \eqref{est_kt} to the underlying number of communities, without assuming a known upper bound on that quantity. To our knowledge this is the first strong consistency result for an estimator of the number of communities. Moreover, it is the first unbounded estimator in the sense that the set of  candidate  values is allowed to grow with the sample size. 

Our proposed penalty function  \eqref{eq:penalty} is of order $k^3\log n$, where $n$ is the number of nodes in the network, therefore for large $n$ it is considerably smaller than the $k(k+1)n\log n/2$ penalty term 
used in \cite{wang2017likelihood}. 
Even if we relate in Proposition~\ref{prop:razao_L_KT}  both objective functions,  it is not evident how to derive the consistency of the BIC as considered in \cite{wang2017likelihood} with a smaller penalty term as the one we propose here. 
 It  also remains as an open question  if the usual BIC penalty term that in this case would be $k(k+1) \log n$ leads to a consistent estimator of  the number of communities, or if the KT criterion can be consistent excluding completely the penalty term, as in the case of context tree models, see \cite{csiszar-talata-2006}. 
From the point of view of the different degree regimes, in comparison to   \cite{wang2017likelihood} we consider a wider family of sparse models with edge probability of order $\rho_n$, where $\rho_n$ can decrease to 0 at a rate such that $n\rho_n \rightarrow \infty$. 
It remains as an open question  if it is even  possible to obtain consistency  for the number of communities 
in the sparse regime with $\rho_n=1/n$.

\appendices
\section{Proofs of auxiliary results}

We begin by stating without proof a basic inequality for the Gamma function. The proof of this result can be found in \cite{davisson1981efficient}.

\begin{lemma}\label{lem:desigualdade_gamma}
For integers  $n=n_1+\cdots+n_J$ we have that
\begin{equation}
\dfrac{\prod\limits_{j=1}^J \left( \frac{n_j}{n} \right)^{n_j} }{\prod\limits_{j=1}^J  \Gamma\left(  n_j + \frac{1}{2} \right)} \;\leq \;\dfrac{1}{\Gamma\left(  n+ \frac{1}{2} \right)\Gamma\left( \frac{1}{2} \right)^{J-1}}\,.
\end{equation}
\end{lemma}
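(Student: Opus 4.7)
The plan is to prove the inequality by induction on $J$.

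The base case $J=1$ is immediate, since $n_1=n$ reduces both sides to $1/\Gamma(n+1/2)$.

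For the inductive step, set $m=n_1+\cdots+n_{J-1}=n-n_J$, and use the algebraic identity
\[
\prod_{j=1}^{J}\left(\frac{n_j}{n}\right)^{n_j} \;=\; \left(\frac{m}{n}\right)^{m}\left(\frac{n_J}{n}\right)^{n_J}\prod_{j=1}^{J-1}\left(\frac{n_j}{m}\right)^{n_j},
\]
which holds because $\sum_{j<J}n_j=m$. Apply the inductive hypothesis to the $J-1$ integers $n_1,\ldots,n_{J-1}$ (summing to $m$) to control the rightmost product, and observe that the prefactor $(m/n)^m(n_J/n)^{n_J}$ is exactly the object bounded by the $J=2$ case of the lemma applied to the pair $(m,n_J)$. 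Multiplying the two bounds makes $\Gamma(m+1/2)$ telescope away and produces the claim at level $J$. So the whole proof reduces to the $J=2$ statement.

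For $J=2$, I would show that, for non-negative integers $a,b$ with $a+b=N$,
\[
\left(\frac{a}{N}\right)^a \left(\frac{b}{N}\right)^b \;\leq\; \frac{\Gamma(a+1/2)\Gamma(b+1/2)}{\Gamma(N+1/2)\Gamma(1/2)}.
\]
The boundary cases $a=0$ and $b=0$ give equality via the convention $0^0=1$. For $a,b\geq 1$, I would use the half-integer identity $\Gamma(m+1/2)=m!\sqrt{\pi}\binom{2m}{m}/4^m$ to rewrite the inequality in the equivalent purely combinatorial form
\[
\binom{2N}{2a}\cdot \frac{a^a b^b}{N^N} \;\leq\; \binom{N}{a},
\]
and then apply Robbins's sharp two-sided Stirling bounds to each of the six factorials involved. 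All exponentials and powers of $e$ cancel, and one checks that the ratio of the right- to the left-hand side has leading order $\sqrt{2}$, while the Robbins correction factor remains safely below $\sqrt{2}$ uniformly over $a,b\geq 1$.

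The main obstacle I anticipate is keeping the Robbins error terms under tight enough control in the $J=2$ base step down to $\min(a,b)=1$, where Stirling is least accurate. Should that bookkeeping prove delicate, a cleaner backup is a monotonicity argument on the auxiliary function
\[
f(a)\;:=\;\frac{\Gamma(a+1/2)\,\Gamma(N-a+1/2)}{\Gamma(N+1/2)\,\Gamma(1/2)\,(a/N)^a\,((N-a)/N)^{N-a}},
\]
computing $f(a+1)/f(a)$ explicitly and combining the symmetry $f(a)=f(N-a)$ with the boundary values $f(0)=f(N)=1$ to conclude $f(a)\geq 1$ for every $a\in\{0,1,\ldots,N\}$.
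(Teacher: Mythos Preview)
The paper does not actually prove this lemma: it is stated in the Appendix with the remark that ``the proof of this result can be found in \citet{davisson1981efficient}''. So there is no in-paper argument to compare against; your proposal supplies a self-contained proof where the authors simply invoke a reference.

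On the substance of your approach: the inductive reduction to $J=2$ is clean and correct. The algebraic identity you state does hold, and multiplying the inductive bound on $\prod_{j<J}(n_j/m)^{n_j}$ by the $J=2$ bound applied to $(m,n_J)$ makes the $\Gamma(m+\tfrac12)$ factor cancel exactly as you claim. One small point to tidy up: the identity and the inductive hypothesis require $m\ge 1$, so you should dispose separately of the degenerate situation where some $n_j=0$ (drop that index: $(0/n)^0=1$ and the corresponding $\Gamma(\tfrac12)$ on the left matches one of the $\Gamma(\tfrac12)$'s on the right, reducing $J$ by one).

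For the $J=2$ step, your combinatorial reformulation
\[
\binom{2N}{2a}\,\frac{a^ab^b}{N^N}\;\le\;\binom{N}{a}
\]
is correct, via $\Gamma(m+\tfrac12)=\sqrt{\pi}\,(2m)!/(4^m m!)$. The Stirling/Robbins route does give a leading ratio of $\sqrt{2}$ on the safe side, but be aware the margin is genuinely thin for small arguments: e.g.\ at $(a,b)=(1,3)$ the monotonicity ratio $f(a+1)/f(a)$ equals $81/80$, and at $(2,4)$ it equals $5120/5103$. So if you pursue the Robbins bookkeeping, you will need the sharp two-sided constants (not just ``Stirling up to $O(1/n)$''), and it may be cleanest to verify $\min(a,b)\le 2$ by hand. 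Your monotonicity backup via $f(a+1)/f(a)$ and the symmetry $f(a)=f(N-a)$ with $f(0)=f(N)=1$ is the more robust route; the ratio simplifies to
\[
\frac{f(a+1)}{f(a)}\;=\;\frac{(a+\tfrac12)\,a^a\,b^b}{(b-\tfrac12)\,(a+1)^{a+1}\,(b-1)^{b-1}},
\]
and showing this is $\ge 1$ for $a+1\le b$ is an elementary (if slightly delicate) one-variable inequality. Either way, your overall plan is sound.
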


We now follow by presenting the proof a Proposition~\ref{prop:razao_L_KT}. 
\begin{proof}[Proof of Proposition~\ref{prop:razao_L_KT}]
To prove the first inequality observe that by 
definition of the KT distribution we have that 
\begin{equation*}
\begin{split}
\K{\a}&=\E_{\nu_k}[\,  \P(\a)  \,] \\
&\leq \E_{\nu_k}\Bigl[\,  \sup_{(\pi,P) \in \Theta^k} \P(\a)  \,\Bigr]\\
&= \sup_{(\pi,P) \in \Theta^k} \P(\a) \,.
\end{split}
\end{equation*}
The second inequality is based on \cite[Appendix I]{gassiat2003optimal} and \cite[Lemma~3.4]{liu1994order}. 
For $(\theta) \in \Theta^k$ we have that 
\begin{equation}\label{p_z}
\P(\z)\;=\;\prod\limits_{1\leq a\leq k} \pi_a^{n_a}
\end{equation}
and
\begin{equation}\label{p_a}
\P(\a| \z)\;=\;\prod\limits_{1\leq a \leq b\leq k}  P_{a,b}^{o_{a,b}}(1-P_{a,b})^{n_{a,b}- o_{a,b}}\,.
\end{equation}
Using that the maximum likelihood estimators for $\pi_a$ and  $P_{a,b}$ are given by $ \dfrac{n_a}{n}$ and $\dfrac{o_{a,b}}{n_{a,b}}$ respectively,  we can bound above  \eqref{p_z} and \eqref{p_a}  by 
\begin{equation}\label{p_z_estimate}
\P(\z)\;\leq\; \sup\limits_{(\theta) \in \Theta^k}\P(\z)\;=\; \prod\limits_{1\leq a\leq k}\left( \dfrac{n_a}{n}  \right)^{n_a}
\end{equation}
and 
\begin{align}\label{p_a_estimate}
\P(\a| \z)&\leq \sup\limits_{(\theta) \in \Theta^k}\P(\a| \z)\notag\\
& = \;\prod\limits_{1\leq a \leq b \leq k}  \left( \dfrac{o_{a,b}}{n_{a,b}} \right)^{o_{a,b}}\left(1-  \dfrac{o_{a,b}}{n_{a,b}}\right)^{n_{a,b}-o_{a,b}}.
\end{align}
Observe that the Krichevsky-Trofimov mixture distribution defined in \eqref{kt-mix} can be written as 
\begin{align}\label{k_a}
&\K{\a}\notag\\
&= \sum\limits_{\z \in [k]^n}\Scale[0.9]{ \biggl( \int_{\Theta^k_1} \P(\z)\nu_k^1(\pi)d\pi \biggr) \biggl(\int_{\Theta^k_2}\P(\a|\z)\nu_k^2(P)dP \biggr)}\notag \\
&= \sum\limits_{\z \in [k]^n}\K{\z} \K{\a|\z}\,,   
\end{align}
where 
\begin{equation*}
\
\nu^1_k(\pi)=  \dfrac{\Gamma(\frac{k}{2})}{\Gamma(\frac{1}{2})^k}\prod\limits_{1\leq a\leq k}\pi_{a}^{-1/2} \,,
\end{equation*}
\begin{equation*}
 \quad \nu^2_k(P)=\prod_{1\leq a \leq b \leq k}\dfrac{1}{\Gamma(\frac{1}{2})^2}P_{a,b}^{-1/2}(1-P_{a,b})^{-1/2} \,,
\end{equation*}
\[
\Theta^k_1= \{\, \pi \, | \, \pi \in (0,1]^k, \sum\limits_{a=1}^k\pi_a=1 \}\, ,
\] and 
\[
\Theta^k_2= \{  \, P  \, | \,  P \in [0,1]^{k\times k}, P  \text{ is symmetric} \, \}.
\]
We start with the evaluation of $\K{\z}$. By a simple calculation we have  that 
\begin{equation}\label{K_z}
\begin{split}
\K{\z}&\;=\; 
\dfrac{\Gamma\left(\frac{k}{2}\right)}{\Gamma\left(\frac{1}{2}\right)^k}\;\dfrac{\prod_{a=1}^k\Gamma\left(n_a+\frac{1}{2}\right)}{\Gamma\left(n+\frac{k}{2}\right)}\,.
\end{split}
\end{equation}
Then combining \eqref{p_z_estimate} and \eqref{K_z} we obtain the bound
\begin{equation}\label{eq:P_KT_z}
\dfrac{\P(\z)}{\K{\z}} \;\leq\;
\dfrac{\Gamma\left(\frac{1}{2}\right)^k \Gamma\left(n+\frac{k}{2}\right)}{\Gamma\left(\frac{k}{2}\right)}\;\prod\limits_{1\leq a\leq k}\dfrac{\left( \dfrac{n_a}{n}  \right)^{n_a}}{\Gamma\left(n_a+\frac{1}{2}\right)} \,.
\end{equation}
Using the fact that $n_1+\cdots + n_k =n$ and Lemma~\ref{lem:desigualdade_gamma} we can bound the second factor in the right-hand side of the last inequality by
\begin{equation*}\label{eq:lemma_na}
\prod\limits_{1\leq a\leq k}\dfrac{\left( \dfrac{n_a}{n}  \right)^{n_a}}{\Gamma\left(n_a+\frac{1}{2}\right)} \;\leq\; \dfrac{1}{\Gamma\left(\frac{1}{2}\right)^{k-1}\, \Gamma\left(n+\frac{1}{2}\right)}
\end{equation*}
and then we obtain the bound
\begin{equation}\label{razao_z}
\dfrac{\P(\z)}{\K{\z}} \;\leq\; \dfrac{\Gamma\left(\frac{1}{2}\right)\Gamma\left(n+\frac{k}{2}\right)}{\Gamma\left(\frac{k}{2}\right)\Gamma\left(n+\frac{1}{2}\right)}\,.
\end{equation}
The same arguments  above can be used to derive the equality
\begin{equation*}\label{K_a}
\K{\a|\z}=\prod_{1\leq a\leq b\leq k}\;\dfrac{\Gamma\left(o_{a,b}+\frac{1}{2}\right)\Gamma\left(n_{a,b} - o_{a,b}+\frac12\right)}{\Gamma\left(\frac12\right)^2\Gamma\left(n_{a,b}+1\right)}
\end{equation*}
and the 
upper bound
\begin{equation}\label{razao_a}
\begin{split}
\dfrac{\P(\a|\z)}{\K{\a|\z}} &\;\leq\;
 \prod\limits_{1\leq a \leq b \leq k}\,  \dfrac{ \Gamma\left(\textstyle\frac{1}{2}\right)\Gamma\left(n_{a,b}+1\right)}{\Gamma\left(n_{a,b}+\frac{1}{2}\right) } \,.
\end{split}
\end{equation}
Then combining the bounds in \eqref{razao_z} and \eqref{razao_a}  we  have that 
\begin{equation}\label{PKTC}
\dfrac{\P(\z,\a)}{\K{\z,\a}} \;\leq\; e^{C(\z,\a)}\,,
\end{equation}
where
\begin{align}\label{razao_final}
C(\z,\a) \;= &\; \log \Bigl( \frac{\Gamma\left(\frac{1}{2}\right)\Gamma\left(n+\frac{k}{2}\right)}{\Gamma\left(\frac{k}{2}\right)\Gamma\left(n+\frac{1}{2}\right)}\Bigr)\notag \\
& + \sum\limits_{1\leq a \leq b \leq k}\log \Bigl( \frac{\Gamma\left(\frac{1}{2}\right)\Gamma\left(n_{a,b}+1\right)}{\Gamma\left(n_{a,b}+\frac{1}{2}\right) } \Bigr)\,.
\end{align}
It can be shown, by using Stirlings' formula for the $\Gamma$
function
\[
x^{x-\frac12} e^{-x} \sqrt{2\pi} \;\leq\; \Gamma(x) \;\leq\; x^{x-\frac12}e^{-x} \sqrt{2\pi} e^{\frac1{12 x}}
\]
as in \cite[Appendix I]{gassiat2003optimal},  that 
\begin{equation}\label{log_gamma}
\begin{split}
\Scale[0.95]{\log \left( \frac{\Gamma\left(\frac{1}{2}\right)\Gamma\left(n+\frac{k}{2}\right)}{\Gamma\left(\frac{k}{2}\right)\Gamma\left(n+\frac{1}{2}\right)}\right) }& \Scale[0.95]{\;\leq\; \left(\frac{k-1}{2} \right)\log n + \frac{k(k-1)}{4n}+ \frac{1}{12n}+ \log\frac{\Gamma(\frac{1}{2})}{\Gamma(\frac{k}{2})}}\\
&\Scale[0.95]{\;\leq\; \left(\frac{k-1}{2} \right)\log n + \frac{k(k-1)}{4n}+ 1}
\end{split}
\end{equation}
and for $1\leq a \leq b\leq k$,   
\begin{equation}\label{log_gamma2}
\begin{split}
\Scale[0.95]{\log \left( \frac{\Gamma\left(\frac{1}{2}\right)\Gamma\left(n_{a,b}+1\right)}{\Gamma\left(n_{a,b}+\frac{1}{2}\right) } \right)} &\; \Scale[0.95]{\leq\; \frac{1}{2}\log n_{a,b} + \frac{1}{2n_{a,b}} + \frac{1}{12 n_{a,b}} + \log \Gamma(\frac12)} \\
&\Scale[0.95]{\; \leq\; \log n +  \frac{3}{2}}
\end{split}
\end{equation}
where the last inequality follows from the fact that $n_{a,b}\leq n^2$  for all $a$ and $b$.
Summing the bounds \eqref{log_gamma} and \eqref{log_gamma2}  we obtain  in \eqref{razao_final} that 
\begin{equation}\label{cza}
C(\z,\a)  \;\leq \;  \textstyle\frac{k(k+2)-1}{2}   \log n + c_k\,,
\end{equation}
where  
\[
c_{k} =k(k+1) + 1\,.
\]
Using the uniform bound \eqref{cza} and \eqref{PKTC} we can finally write
\begin{equation*}
\begin{split}
&\Scale[0.95]{\log\sup\limits_{(\pi,P) \in \Theta^k} \P(\a)\,\;}  \\
&\Scale[0.95]{\leq  \sup\limits_{(\pi,P) \in \Theta^k}\log\biggl(\; \sum\limits_{\z \in [k]^n}e^{C(\z,\a)}\K{\z}\K{\a|\z}\biggr)}\\
&\leq \log\K{\a} +  \textstyle\frac{k(k+2)-1}{2}  \log n + c_{k}
\end{split}
\end{equation*}
and this concludes the proof. 
\end{proof}


Now we state and prove a lemma that is useful to bound
the probability of overestimation.

\begin{lemma}\label{prop:ineq_prob_k} 
For $k > k_0$
we have 
\begin{equation*}
\begin{split}
&\Po( \hat{k}_{\mbox{\tiny{KT}}}(\a) = k )\\
&\hspace{2cm} \; \leq\; \exp\left\lbrace   \textstyle\frac{k_0(k_0+2)-1}{2}\log n + c_{k_0} + d_{k_0,k,n} \right\rbrace\,,
\end{split}
\end{equation*}
where  $d_{k_0,k,n} = \pen{k_0,n}-\pen{k,n}$.
\end{lemma}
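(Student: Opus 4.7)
The plan is to combine three ingredients: (i) the defining property of the argmax estimator, (ii) the non-asymptotic ratio bound of Proposition~\ref{prop:razao_L_KT}, and (iii) the fact that $\KT{k}{\cdot}$ is itself a probability distribution on adjacency matrices, so it sums to one. This is the standard change-of-measure argument used to bound overestimation probabilities of Krichevsky-Trofimov-type estimators.

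First I would unfold what $\hat{k}_{\mbox{\tiny{KT}}}(\a)=k$ forces. By the definition \eqref{est_kt}, on the event $\{\hat{k}_{\mbox{\tiny{KT}}}(\A)=k\}$ we have
\[
\log \KT{k}{\A} - \pena{k}{n} \;\geq\; \log \KT{k_0}{\A} - \pena{k_0}{n}\,,
\]
which rearranges to $\KT{k_0}{\A}\leq \KT{k}{\A}\,e^{d_{k_0,k,n}}$, with $d_{k_0,k,n}=\pena{k_0}{n}-\pena{k}{n}$.

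Next, I would use Proposition~\ref{prop:razao_L_KT} at $k=k_0$ to control the true probability by the mixture at the true order: since $(\pi^0,P^0)\in\Theta^{k_0}$,
\[
\Po(\a)\;\leq\;\sup_{(\pi,P)\in\Theta^{k_0}}\!\P(\a)\;\leq\;\KT{k_0}{\a}\,\exp\!\left\{\textstyle\frac{k_0(k_0+2)-1}{2}\log n + c_{k_0,n}\right\}
\]
for every realization $\a$.

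The final step combines the two bounds and exploits that $\KT{k}{\cdot}$ is a probability mass function on symmetric zero-diagonal $\{0,1\}^{n\times n}$ matrices (being a mixture of such, by \eqref{kt-mix}). Namely,
\begin{align*}
\Po(\hat{k}_{\mbox{\tiny{KT}}}(\A)=k)
&\;=\;\sum_{\a:\,\hat{k}_{\mbox{\tiny{KT}}}(\a)=k}\Po(\a)\\
&\;\leq\;\exp\!\left\{\textstyle\frac{k_0(k_0+2)-1}{2}\log n + c_{k_0,n}\right\}\!\sum_{\a:\,\hat{k}_{\mbox{\tiny{KT}}}(\a)=k}\KT{k_0}{\a}\\
&\;\leq\;\exp\!\left\{\textstyle\frac{k_0(k_0+2)-1}{2}\log n + c_{k_0,n}+d_{k_0,k,n}\right\}\!\sum_{\a:\,\hat{k}_{\mbox{\tiny{KT}}}(\a)=k}\KT{k}{\a}\\
&\;\leq\;\exp\!\left\{\textstyle\frac{k_0(k_0+2)-1}{2}\log n + c_{k_0,n}+d_{k_0,k,n}\right\}\,,
\end{align*}
the last inequality since $\sum_{\a}\KT{k}{\a}\leq 1$. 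No step is genuinely hard here; the only subtlety to verify carefully is that $\KT{k}{\cdot}$, as defined by \eqref{kt-mix}, indeed integrates to one as a distribution on the range of $\A$, which is immediate from Fubini since each $\P(\cdot)$ is a probability on that range.
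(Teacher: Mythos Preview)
Your proof is correct and follows essentially the same argument as the paper: you use the argmax definition to pass from the event $\{\hat{k}_{\mbox{\tiny{KT}}}=k\}$ to the inequality $\KT{k_0}{\a}\leq \KT{k}{\a}\,e^{d_{k_0,k,n}}$, apply Proposition~\ref{prop:razao_L_KT} to dominate $\Po(\a)$ by $\KT{k_0}{\a}$ up to the factor $\exp\{\frac{k_0(k_0+2)-1}{2}\log n + c_{k_0,n}\}$, and then change measure to $\KT{k}{\cdot}$ and use that it sums to at most one. The paper's proof is organized with explicit events $A(k)\subset B(k)$ and indicator functions, but the logic and ingredients are identical.
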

\begin{proof}
For $k > k_0$ define the events
\[
A(k) \;=\; \Bigl\{\argmax\limits_{1\leq k'\leq n} \{ \,  \log \KT{k'}{\a} - \frac12 (k')^3\log n \,\} = k\Bigr\}
\]
and
\[
B(k) \;=\; \Bigl\{\KT{k_0}{\a} \leq \KT{k}{\a} \text{e}^{d_{k_0,k,n}}\Bigr\}\,,
\]
where 
\[
d_{k_0,k,n} =  \pen{k_0,n}-\pen{k,n}\,.
\]
Observe that $A(k) \subset B(k)$ for all $k$. 
Then 
\begin{align}\label{over_1}
\Po( \hat{k}_{\mbox{\tiny{KT}}}(\a) = k )  \;&= \;\sum\limits_{\a}\Po(\a)\mathds{1}_{A(k)}
\notag \\
& \leq \; \sum\limits_{\a}\Po(\a)\mathds{1}_{B(k)}\,.
\end{align}
By Proposition~\ref{prop:razao_L_KT}
\begin{align*}
\Scale[0.95]{\log \Po(\a)} &\;\leq\; \Scale[0.95]{\log\sup\limits_{(\theta) \in \Theta^{k_0}} \P(\a)}\\
&\; \Scale[0.95]{\leq \;\log\KT{k_0}{\a}+\frac{k_0(k_0
+2)-1}{2} \log n + c_{k_0}}
\end{align*}
and therefore 
\begin{equation}\label{over_2}
\Po(\a) \;\leq\;  \KT{k_0}{\a}n^{\frac{k_0(k_0
+2)-1}{2}}e^{ c_{k_0}}\,.
\end{equation}
Applying \eqref{over_2} in \eqref{over_1} we obtain that 
\begin{align*}
\Po( \hat{k}_{\mbox{\tiny{KT}}}&(\a) = k )\\
& \;\leq\; \sum\limits_{\a}\KT{k_0}{\a}n^{\frac{k_0(k_0
+2)-1}{2}}e^{ c_{k_0}}\mathds{1}_{B(k)}\\
&  \;\leq\; \sum\limits_{\a}\KT{k}{\a}\, e^{d_{k_0,k,n}} n^{\frac{k_0(k_0+2)-1}{2}}e^{ c_{k_0}}\\
& = \exp\left\lbrace  \textstyle\frac{k_0(k_0+2)-1}{2}\log n + c_{k_0} + d_{k_0,k,n} \right\rbrace\,,
\end{align*}
where the last equality follows from the fact that $\K{\cdot}$ is a probability distribution on the space of adjacency matrices.
This concludes the proof of Lemma~\ref{prop:ineq_prob_k}.
\end{proof}

\section*{Acknowledgment}
This work was produced as part of the activities of FAPESP Research, 
Innovation and Dissemination Center for Neuromathematics, 
grant 2013/07699-0, and FAPESP's project  
\emph{Model selection in high dimensions: theoretical properties and applications}, grant 2019/17734- 3, S\~ao Paulo Research Foundation, Brazil. 
AC was supported by grant 2015/12595-4, S\~ao 
Paulo Research Foundation (FAPESP). 
Part of this work was completed while AC was a Visiting Postdoctoral Researcher at University of Michigan.
She thanks the support and hospitality of this institution.

\ifCLASSOPTIONcaptionsoff
  \newpage
\fi



\bibliographystyle{IEEEtran}
\bibliography{references}

\begin{thebibliography}{10}
\providecommand{\url}[1]{#1}
\csname url@samestyle\endcsname
\providecommand{\newblock}{\relax}
\providecommand{\bibinfo}[2]{#2}
\providecommand{\BIBentrySTDinterwordspacing}{\spaceskip=0pt\relax}
\providecommand{\BIBentryALTinterwordstretchfactor}{4}
\providecommand{\BIBentryALTinterwordspacing}{\spaceskip=\fontdimen2\font plus
\BIBentryALTinterwordstretchfactor\fontdimen3\font minus
  \fontdimen4\font\relax}
\providecommand{\BIBforeignlanguage}[2]{{%
\expandafter\ifx\csname l@#1\endcsname\relax
\typeout{** WARNING: IEEEtran.bst: No hyphenation pattern has been}%
\typeout{** loaded for the language `#1'. Using the pattern for}%
\typeout{** the default language instead.}%
\else
\language=\csname l@#1\endcsname
\fi
#2}}
\providecommand{\BIBdecl}{\relax}
\BIBdecl

\bibitem{holland1983stochastic}
P.~W. Holland, K.~B. Laskey, and S.~Leinhardt, ``Stochastic blockmodels: First
  steps,'' \emph{Social networks}, vol.~5, no.~2, pp. 109--137, 1983.

\bibitem{bickel2009nonparametric}
P.~J. Bickel and A.~Chen, ``A nonparametric view of network models and
  newman--girvan and other modularities,'' \emph{Proceedings of the National
  Academy of Sciences}, vol. 106, no.~50, pp. 21\,068--21\,073, 2009.

\bibitem{amini2013pseudo}
A.~A. Amini, A.~Chen, P.~J. Bickel, and E.~Levina, ``Pseudo-likelihood methods
  for community detection in large sparse networks,'' \emph{The Annals of
  Statistics}, vol.~41, no.~4, pp. 2097--2122, 2013.

\bibitem{daudin2008mixture}
J.-J. Daudin, F.~Picard, and S.~Robin, ``A mixture model for random graphs,''
  \emph{Statistics and computing}, vol.~18, no.~2, pp. 173--183, 2008.

\bibitem{latouche2012variational}
P.~Latouche, E.~Birmele, and C.~Ambroise, ``Variational bayesian inference and
  complexity control for stochastic block models,'' \emph{Statistical
  Modelling}, vol.~12, no.~1, pp. 93--115, 2012.

\bibitem{rohe2011spectral}
K.~Rohe, S.~Chatterjee, B.~Yu \emph{et~al.}, ``Spectral clustering and the
  high-dimensional stochastic blockmodel,'' \emph{The Annals of Statistics},
  vol.~39, no.~4, pp. 1878--1915, 2011.

\bibitem{van2017bayesian}
S.~L. van~der Pas and A.~W. van~der Vaart, ``Bayesian community detection,''
  \emph{Bayesian Analysis}, vol.~13, no.~3, pp. 767--796, 2018.

\bibitem{bickel2013asymptotic}
P.~Bickel, D.~Choi, X.~Chang, and H.~Zhang, ``Asymptotic normality of maximum
  likelihood and its variational approximation for stochastic blockmodels,''
  \emph{Ann. Statist.}, vol.~41, no.~4, pp. 1922--1943, 2013.

\bibitem{su2017strong}
L.~Su, W.~Wang, and Y.~Zhang, ``Strong consistency of spectral clustering for
  stochastic block models,'' \emph{IEEE Transactions on Information Theory},
  vol.~66, no.~1, pp. 324--338, 2019.

\bibitem{abbe2017community}
E.~Abbe, ``Community detection and stochastic block models: recent
  developments,'' \emph{The Journal of Machine Learning Research}, vol.~18,
  no.~1, pp. 6446--6531, 2017.

\bibitem{le2015estimating}
C.~M. Le and E.~Levina, ``Estimating the number of communities in networks by
  spectral methods,'' \emph{arXiv preprint arXiv:1507.00827}, 2015.

\bibitem{bickel2016hypothesis}
P.~J. Bickel and P.~Sarkar, ``Hypothesis testing for automated community
  detection in networks,'' \emph{Journal of the Royal Statistical Society:
  Series B (Statistical Methodology)}, vol.~78, no.~1, pp. 253--273, 2016.

\bibitem{lei2016goodness}
J.~Lei, ``A goodness-of-fit test for stochastic block models,'' \emph{The
  Annals of Statistics}, vol.~44, no.~1, pp. 401--424, 2016.

\bibitem{chen2018network}
K.~Chen and J.~Lei, ``Network cross-validation for determining the number of
  communities in network data,'' \emph{Journal of the American Statistical
  Association}, vol. 113, no. 521, pp. 241--251, 2018.

\bibitem{li2016network}
T.~Li, E.~Levina, and J.~Zhu, ``Network cross-validation by edge sampling,''
  \emph{Biometrika}, vol. 107, no.~2, pp. 257--276, 2020.

\bibitem{biernacki2000}
C.~{Biernacki}, G.~{Celeux}, and G.~{Govaert}, ``Assessing a mixture model for
  clustering with the integrated completed likelihood,'' \emph{IEEE
  Transactions on Pattern Analysis and Machine Intelligence}, vol.~22, no.~7,
  pp. 719--725, July 2000.

\bibitem{wang2017likelihood}
Y.~R. Wang and P.~J. Bickel, ``Likelihood-based model selection for stochastic
  block models,'' \emph{The Annals of Statistics}, vol.~45, no.~2, pp.
  500--528, 2017.

\bibitem{schwarz1978}
G.~Schwarz, ``Estimating the dimension of a model,'' \emph{Ann. Statist.},
  vol.~6, no.~2, pp. 461--464, 1978.

\bibitem{hu2019corrected}
J.~Hu, H.~Qin, T.~Yan, and Y.~Zhao, ``Corrected bayesian information criterion
  for stochastic block models,'' \emph{Journal of the American Statistical
  Association}, vol.~0, no.~0, pp. 1--13, 2019.

\bibitem{barron1998}
A.~{Barron}, J.~{Rissanen}, and {Bin Yu}, ``The minimum description length
  principle in coding and modeling,'' \emph{IEEE Transactions on Information
  Theory}, vol.~44, no.~6, pp. 2743--2760, Oct 1998.

\bibitem{kt1981}
R.~E. Krichevsky and V.~K. Trofimov, ``The performance of universal encoding.''
  \emph{IEEE Transactions on Information Theory}, vol.~27, no.~2, pp. 199--206,
  1981.

\bibitem{finesso1992estimation}
L.~Finesso, H.~Kimura, and S.~Kodama, ``Estimation of the order of a finite
  markov chain,'' in \emph{Recent Advances in the Mathematical Theory of
  Systems, Control, and Network Signals, Proc. MTNS-91, H. Kimura and S.
  Kodama, Eds., Mita Press}, 1992, pp. 643--645.

\bibitem{csiszar2000consistency}
I.~Csisz{\'a}r and P.~C. Shields, ``The consistency of the bic markov order
  estimator,'' \emph{The Annals of Statistics}, vol.~28, no.~6, pp. 1601--1619,
  2000.

\bibitem{csiszar-talata-2006}
I.~Csiszar and Z.~Talata, ``Context tree estimation for not necessarily finite
  memory processes, via bic and mdl,'' \emph{IEEE Transactions on Information
  Theory}, vol.~52, no.~3, pp. 1007--1016, March 2006.

\bibitem{liu1994order}
C.-C. Liu and P.~Narayan, ``Order estimation and sequential universal data
  compression of a hidden markov source by the method of mixtures,'' \emph{IEEE
  Transactions on Information Theory}, vol.~40, no.~4, pp. 1167--1180, 1994.

\bibitem{gassiat2003optimal}
E.~Gassiat and S.~Boucheron, ``Optimal error exponents in hidden markov models
  order estimation,'' \emph{IEEE Transactions on Information Theory}, vol.~49,
  no.~4, pp. 964--980, 2003.

\bibitem{zhao2012consistency}
Y.~Zhao, E.~Levina, and J.~Zhu, ``Consistency of community detection in
  networks under degree-corrected stochastic block models,'' \emph{The Annals
  of Statistics}, vol.~40, no.~4, pp. 2266--2292, 2012.

\bibitem{davisson1981efficient}
L.~Davisson, R.~McEliece, M.~Pursley, and M.~Wallace, ``Efficient universal
  noiseless source codes,'' \emph{IEEE Transactions on Information Theory},
  vol.~27, no.~3, pp. 269--279, 1981.

\end{thebibliography}

\begin{IEEEbiography}
[{\includegraphics[width=1in,height=1.25in,clip,keepaspectratio]{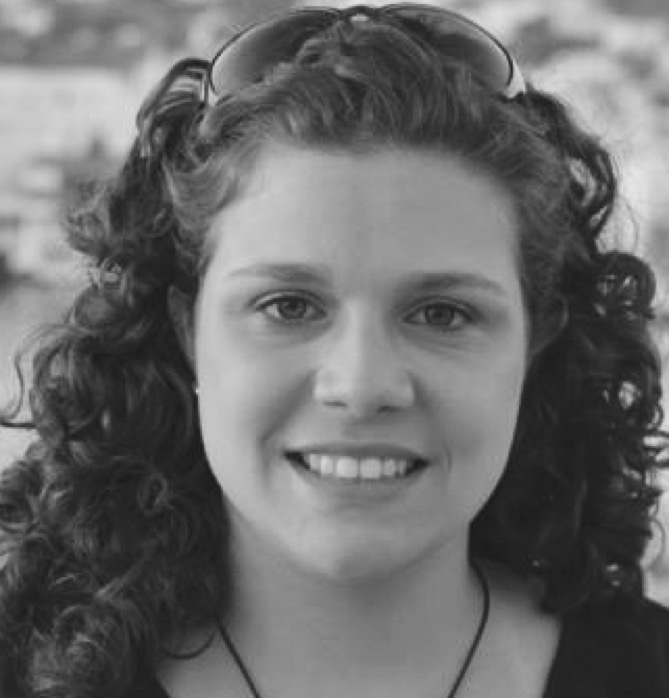}}]
{Andressa Cerqueira} 
received a PhD degree in Statistics from the University of S\~ao Paulo in 2018. 
During 2018 she was a postdoctoral researcher at University of Campinas. 
During 2019 she spent one year as a visiting postdoctoral researcher at University of Michigan. Since 2020 she is Assistant Professor at Department of Statistics at
Federal University of S\~ao Carlos (UFSCar) and her research is focused on inference in networks.
\end{IEEEbiography}

\vspace*{-5mm}

\begin{IEEEbiography}[{\includegraphics[width=1in,height=1.25in,clip,keepaspectratio]{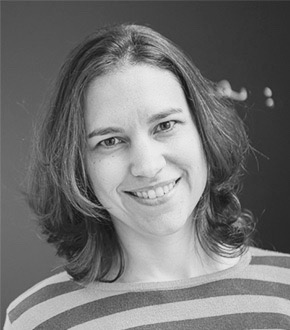}}]{Florencia Leonardi}
received a bachelor's degree in Mathematics from the University of Mar del Plata in 2002 and a PhD degree in 
Bioinformatics from the University of S\~ao Paulo in 2007.  
She became  Adjoint Professor in 2008 at the Institute of Mathematics and Statistics of the University of S\~ao Paulo, and Associate Professor since 2017. 
During 2014- 2015 she spent a sabbatical year as a Visiting Professor at ETHZ in Switzerland. 
Her main research interests are inference and applications of stochastic processes.
\end{IEEEbiography}

\vfill

\end{document}